\newtheorem{theorem}{Theorem}[section]
\newtheorem{lemma}[theorem]{Lemma}
\newcommand{\C}{\mathbb{C}}
\newcommand{\Q}{\mathbb{Q}}
\newcommand{\R}{\mathbb{R}}
\newcommand{\X}{\mathbb{X}}
\newcommand{\Z}{\mathbb{Z}}
\newcommand{\h}{\mathbb{H}^2}
\newcommand{\F}{\mathcal{F}}
\newcommand{\supp}{\mathrm{supp}}
\newcommand{\gl}[1]{\mathrm{GL}(#1)}
\newcommand{\psl}[2]{\mathrm{PSL}_{#1}(#2)}
\DeclareMathOperator{\Id}{Id}
\DeclareMathOperator{\arccosh}{arccosh}
\DeclareMathOperator{\area}{Area}
\DeclareMathOperator{\ax}{ax}
\DeclareMathOperator{\cs}{cs}
\DeclareMathOperator{\spec}{spec_{\text{discr}}}
\newcommand{\eps}{\varepsilon}
\newcommand{\abs}[1]{\left\vert#1\right\vert}
\newcommand{\ignore}[1]{}
\begin{document}

\title{Cheeger constants of hyperbolic reflection groups and Maass cusp forms
of small eigenvalues}
\author[B.~A.\ Benson]{Brian~A.\ Benson}\address{Department of Mathematics,
University of California, Riverside, 900 University Avenue, Riverside,
CA 92521, email: bbenson@ucr.edu}
\author[G.~S.\ Lakeland]{Grant~S.\ Lakeland}\address{Department of Mathematics
\& Computer Science, Eastern Illinois University, 600 Lincoln Avenue,
Charleston, IL 61920, email: gslakeland@eiu.edu}
\author[H.~Then]{Holger Then}\address{Freie Waldorfschule Augsburg,
Dr.-Schmelzing-Stra\ss e~52, 86169~Augsburg, Germany,
e-mail: holger.then@bristol.ac.uk}

\begin{abstract}
We compute the Cheeger constants of a collection of hyperbolic surfaces
corresponding to maximal non-compact arithmetic Fuchsian groups, and to
subgroups which are the rotation subgroup of maximal reflection groups.
The Cheeger constants are geometric quantities, but relate to the
smallest eigenvalues of Maass cusp forms. From geometrical considerations,
we find evidence for the existence of small eigenvalues. We search for
these small eigenvalues and compute the corresponding Maass cusp forms
numerically.
\end{abstract}

\date{July 20, 2019}

\thanks{G.~S.~L.\ thanks Chris Leininger and Alan Reid for helpful
conversations.}

\maketitle

\section{Introduction}\label{s:intro}

Given a hyperbolic Riemann surface $M = \Gamma\backslash\h$ for a cofinite,
discrete Fuchsian group $\Gamma\subset\psl2\R$, the \emph{Cheeger constant} of
$M$ is defined as
\begin{align*}
h(M) = \inf_{E \subset M} \dfrac{\ell(E)}{\min\left\{ \area(A), \area(B)
\right\}}
\end{align*}
where $E$ varies over all $1$-dimensional subsets of $M$ which divide $M$ into
two disjoint components $A$ and $B$, and $\ell(E)$ denotes the length of $E$.
This quantity is related to the geometry of $M$ and was introduced by Cheeger
\cite{Cheeger}. The quantity is also related to the first eigenvalue
$\lambda_1$ of the Laplacian operator on $M$; when $M$ is a closed Riemannian
manifold (i.e., $M$ has no cusps or cone points, or equivalently $\Gamma$ has no
parabolic or elliptic elements), $h$ and $\lambda_1$ are related via the inequalities
\begin{align*}
\dfrac{h^2(M)}{4} \leq \lambda_1(M) \leq 2h(M) + 10h^2(M),
\end{align*}
where the first inequality is due to Cheeger \cite{Cheeger}, and the second to
Buser \cite{Buser}. When $M$ is finite area, but permitted to be non-compact and have cone points, 
one has that 
\begin{align*}
\dfrac{h^2(M)}{4} \leq \lambda(M) \leq Ch(M)
\end{align*}
where $\lambda(M)=\inf_{f}\tfrac{\int_M \|\nabla f\|^2 \, dA}{\int_M f^2 \, dA}$
with $\area(\supp(f))>0,$ see \cite[Section 7]{Buser}. In particular, if a sequence of surfaces $M_n$ have
$h(M_n)$ converging to $0$ as $n$ diverges to $\infty$, then also
$\lambda_1(M_n)$ goes to $0$, and vice-versa.

Due to its definition as an infimum, it is often possible to bound the Cheeger
constant from above by finding a separating subset $E$, but computing the
constant precisely is a more complicated endeavor. The author of \cite{Benson1}
recently provided an algorithm which enables practical computation of a
wide range of examples of hyperbolic surfaces. The examples presently of
interest are (the orientation-preserving subgroups of) maximal arithmetic
reflection groups; specifically, we study those which are not maximal
Fuchsian groups. In \cite{LakelandThesis} it was shown that eight of the ten
such maximal reflection groups have the property that they are not
\emph{congruence} (see section~\ref{s:preliminaries} for definition).

Belolipetsky \cite{Belo} asked about congruence in the context of obtaining
a practical lower bound on $\lambda_1$ for maximal arithmetic reflection
groups, because congruence groups have $\lambda_1$ bounded below by
$975/4096$ (Kim and Sarnak \cite{KimSarnak}), and conjecturally by $1/4$
(Selberg \cite{Selberg}). We therefore seek to understand the geometry of
those examples which are not congruence, with a view to establishing whether
their values of $\lambda_1$ can be much smaller than these bounds. We prove the
existence of small
eigenvalues by searching for them and we compute the corresponding
Maass cusp forms numerically. The smallest eigenvalue we find is approximately
$0.14843$, and in total we find that six of our non-congruence examples have $\lambda_1$ less than $1/4$.

We note that all of the examples considered here have spectral gap at most $1/4$, on account of the fact that the continuous spectrum of the Laplacian is $[ 1/4 , \infty)$ \cite{Roelcke1966} (see Section 4). We therefore compute the first discrete eigenvalue $\lambda_1^{\mbox{disc}}$. We find two examples where the first discrete eigenvalue $\lambda_1^{\mbox{disc}}(M)$ is much larger than $h(M).$
 We find one further example where $\lambda_1^{\mbox{disc}}(M) > h(M),$ but the values are still similar, and the 
 remaining seven examples have $\lambda_1^{\mbox{disc}}(M) < h(M)$. This suggests that the three examples (one congruence, two non-congruence) with large $\lambda_1^{\mbox{disc}}$ are distinguished geometrically from
the other seven.


Since the examples in question are all index two subgroups of maximal
arithmetic Fuchsian groups, which are necessarily all congruence, we also
compute the Cheeger constant of the maximal Fuchsian group, with a view to
studying how the Cheeger constant changes when passing to the subgroup. We find
that the Cheeger constant seems to drop more dramatically when passing from a
congruence group to a non-congruence subgroup than when passing to a congruence
subgroup. More precisely, we find that in seven of the eight cases where the
subgroup is non-congruence, $h$ drops by more than half, and in the two cases
where the subgroup is congruence, $h$ drops by less than half.

This paper is organized as follows. In section~\ref{s:preliminaries}, we
provide some background and prove some helpful technical results. The
algorithm that will be used to compute the Cheeger constant
is described in section~\ref{s:algorithm}, and it is implemented in detail in
sections~\ref{s:examples}--\ref{sec 3.4}. Section~\ref{s:results} summarizes
our results on the Cheeger constant. In section~\ref{sec 4}, we compute
and investigate Maass cusp forms.
\ignore{The algorithm for computing Maass cusp forms is given in
appendix~\ref{sec A}.}
Our conclusions are stated in section~\ref{sec 5}.

\section{Preliminaries}\label{s:preliminaries}

\subsection{Arithmetic reflection groups}\label{sec 2.1}

A cofinite hyperbolic reflection group is the group generated by reflections
in the sides of a finite area hyperbolic polygon. Such a group is called
\emph{maximal} if it is not properly contained in another reflection group.
The index two orientation-preserving subgroup of such a group will be
referred to as its \emph{rotation subgroup}; this is a Fuchsian group. A
non-compact, cofinite, hyperbolic reflection group is \emph{arithmetic} if
it (and its rotation subgroup) is commensurable with the modular group
$\psl2\Z$; that is, if the reflection group has a finite index subgroup which
is ($\psl2\R$-conjugate to) a finite index subgroup of $\psl2\Z$. Although there
are infinitely many arithmetic reflection groups, it is known that there are
only finitely many maximal arithmetic reflection groups: in dimension two
this is due to Long, Maclachlan and Reid \cite{LMR}; in dimension three to
Agol \cite{Agol}; and in other dimensions independently to Agol,
Belolipetsky, Storm, and Whyte \cite{ABSW}, and to Nikulin \cite{Nikulin}.
Note that this includes the result that arithmetic reflection groups do not
exist above dimension 30.

For any positive integer $n$, there is a natural homomorphism
\begin{align*}
\psi_n : \psl2\Z \to \psl2{\Z/n\Z}
\end{align*}
defined by reducing the matrix modulo $n$. The kernel of this homomorphism is
a finite index subgroup of $\psl2\Z$ consisting of matrices congruent to
$\pm\Id$; this subgroup is denoted $\Gamma(n)$ and is called the
\emph{principal congruence subgroup} of level $n$. A Fuchsian group
commensurable with $\psl2\Z$ is called a \emph{congruence} group if it
contains some principal congruence subgroup.

It was shown by Helling \cite{Helling1, Helling2} that the maximal Fuchsian
groups commensurable with the modular group are obtained by taking the
normalizers $N(\Gamma_0(n))$ in $\psl2\R$ of the groups
\begin{align*}
\Gamma_0(n) = \left\{ \begin{pmatrix}a & b \\ c & d \end{pmatrix}
\in \psl2\Z \mid c \equiv 0 \mod n \right\}
\end{align*}
for $n$ a square-free integer. The modular group itself, as well as the
twelve normalizers $N(\Gamma_0(n))$ for
$n = 2, 3, 5, 6, 7, 10, 13, 14, 21, 30, 34, 39$ are rotation subgroups
of reflection groups \cite{LakelandThesis}. In these cases, the reflection
groups are necessarily maximal arithmetic reflection groups, and all are
necessarily congruence groups. The ten normalizers $N(\Gamma_0(n))$
for $n= 11, 15, 17, 19, 22, 26, 33, 42, 55, 66$ are not rotation subgroups of
reflection groups, but each contains an index two subgroup which is. Again,
these reflection groups are necessarily maximal arithmetic, but they are not
necessarily congruence groups, and in fact, only the examples corresponding
to $n=15$ and $n=17$ are congruence groups. The other eight examples are not
congruence groups. These ten examples are the focus of the present study.

\subsection{Fuchsian groups and quotient surfaces}\label{sec 2,2}

In this section, we give some elementary results which will be useful in
computing the Cheeger constants of our examples. We refer to Beardon
\cite{Beardon} for background on the material in this section.

\begin{lemma}\label{domainlemma}Suppose that $\Gamma$ is a cofinite Fuchsian
group and $\F$ is a convex, finite-sided fundamental domain for the action
of $\Gamma$ on $\h$. Suppose that there exists a finite collection of
hyperbolic elements $\{ \gamma_i \}_{i=1}^k \subset \Gamma$ such that:
\begin{itemize}
\item for each $i$, $\gamma_i$ pairs two sides, $s_i$ and $s_i'$, of $\F$;
\item for each $i$, $\ax(\gamma_i)$ intersects both $s_i$ and $s_i'$
orthogonally; and
\item for each $i \neq j$, $\ax(\gamma_i) \cap \ax(\gamma_j) = \emptyset$.
\end{itemize}
Suppose further that the axes $\ax(\gamma_i)$ divide $\F$ into two regions,
dark and light, such that each dark side is paired with another dark side,
and each light side is paired with another light side. Then the images of
the axes $\ax(\gamma_i)$ are geodesics on the quotient surface
$M=\Gamma\backslash\h$ which together separate $M$.\end{lemma}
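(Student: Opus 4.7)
The plan is to establish two things: first, that each axis $\ax(\gamma_i)$ descends under the covering projection $\pi\colon \h \to M$ to a closed geodesic on $M$, so that $E := \bigcup_i \pi(\ax(\gamma_i))$ is a $1$-dimensional subset of $M$; and second, that $M \setminus E$ is disconnected, with components given by the projections of the dark and light regions of $\F$.

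For the first point, because $\gamma_i$ is hyperbolic with invariant axis $\ax(\gamma_i)$, and pairs the two sides $s_i, s_i'$ of $\F$ which that axis crosses orthogonally, the segment of $\ax(\gamma_i)$ inside $\F$ is a fundamental arc for $\langle \gamma_i \rangle$ acting on its axis. Its image in $M$ is therefore a closed geodesic of length equal to the translation length of $\gamma_i$. The pairwise disjointness of the axes in $\h$ ensures that the corresponding arcs inside $\F$ do not cross, so the dark/light partition of $\F$ is unambiguous.

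For the second point, since $\F$ is a fundamental domain, distinct interior points lie in distinct $\Gamma$-orbits, so $\pi$ is injective on $\mathrm{int}(\F)$. An identification between a dark point and a light point could therefore only occur on $\partial \F$, via a side-pairing or through a chain of side-pairings around a vertex. By hypothesis, each side-pairing $\gamma_i$ sends a dark side to a dark side and a light side to a light side, so the coloring is preserved on the interior of each side; propagating around each vertex cycle then preserves it at vertices as well. Consequently $\pi(\mathrm{int}\,D) \cap \pi(\mathrm{int}\,L) = \emptyset$, where $D$ and $L$ denote the dark and light regions of $\F$. Any continuous path in $M$ from a point of $\pi(\mathrm{int}\,D)$ to a point of $\pi(\mathrm{int}\,L)$ then lifts to a path in $\h$ which cannot leave a monochromatic region without crossing some $\ax(\gamma_i)$; hence the original path must meet $E$, which shows that $E$ separates $M$.

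The main obstacle is the boundary analysis at vertices of $\F$: one must rule out a hidden identification between a dark boundary point and a light boundary point arising from a composition of side-pairings around a vertex cycle. The color-compatibility of the side-pairings provided by the hypothesis is precisely what makes the propagation argument succeed and forces the two colored regions to remain distinct under the quotient.
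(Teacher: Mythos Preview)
Your proof is correct and follows essentially the same approach as the paper's: show that the axes descend to closed geodesics on $M$ (using the orthogonal-intersection side-pairing to close them up), and then argue that the color-compatibility of the side-pairings forces any path from the dark region to the light region to cross an axis. Your version is simply more explicit, spelling out the vertex-cycle propagation and the path-lifting argument that the paper's terse proof leaves implicit.
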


\begin{proof} Since the axes are geodesics in $\h$, they project to
geodesics on the quotient surface. Since $\ax(\gamma_i)$ intersects
both $s_i$ and $s_i'$ orthogonally, gluing $s_i$ to $s_i'$ closes up
the geodesic. The disjointness of the axes implies that the corresponding
geodesics are disjoint on the quotient surface. Since like-shaded sides
are identified with like, the only way to move from light side to dark is
to cross an axis. \end{proof}

\begin{lemma}\label{invlength}Let $\alpha_1$ and $\alpha_2$ be elliptic
isometries of order 2 (henceforth called involutions) fixing $z_1$ and
$z_2 \neq z_1$ respectively. Then the product $\alpha = \alpha_1 \alpha_2$
is hyperbolic, the axis $\ax{( \alpha )}$ is the geodesic through $z_1$
and $z_2$, and the translation length of $\alpha$ along its axis is twice
the distance between $z_1$ and $z_2$.\end{lemma}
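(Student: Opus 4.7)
The plan is to decompose each involution as a product of two reflections in perpendicular geodesics, chosen so that a common reflection cancels in the product. Let $L$ be the unique hyperbolic geodesic through $z_1$ and $z_2$, and for $i=1,2$ let $L_i$ be the geodesic through $z_i$ perpendicular to $L$. A standard fact (see Beardon) is that a rotation by angle $\theta$ about a point $p$ is realized as the composition of reflections in any two geodesics through $p$ meeting at $p$ with angle $\theta/2$. Since $\alpha_i$ is a rotation by $\pi$ about $z_i$, and $L$ and $L_i$ meet orthogonally at $z_i$, we can write
\begin{align*}
\alpha_1 = R_{L_1} R_L, \qquad \alpha_2 = R_L R_{L_2},
\end{align*}
where $R_X$ denotes reflection in the geodesic $X$. (Either ordering is valid because $L$ and $L_i$ are perpendicular, so their reflections commute.)

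Next I would compose these expressions and use the involutive property of reflections, $R_L R_L = \Id$, to obtain
\begin{align*}
\alpha_1\alpha_2 = R_{L_1} R_L R_L R_{L_2} = R_{L_1} R_{L_2}.
\end{align*}
Because $z_1 \neq z_2$, the geodesics $L_1$ and $L_2$ are distinct, and since both are perpendicular to the common geodesic $L$, they are ultraparallel (they share no point of $\h$ nor of $\partial \h$, as two perpendiculars to a line cannot meet). Therefore the product of reflections in $L_1$ and $L_2$ is a hyperbolic isometry; its axis is their unique common perpendicular, which is precisely $L$.

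Finally, I would recall (or briefly verify, e.g.\ by conjugating so that $L$ is the imaginary axis and computing directly) that when $L_1, L_2$ are ultraparallel with common perpendicular $L$, the translation length of $R_{L_1}R_{L_2}$ along $L$ equals twice the distance between $L_1$ and $L_2$ measured along $L$. Since $L_i \cap L = \{z_i\}$, that distance is exactly $d(z_1,z_2)$, giving translation length $2d(z_1,z_2)$. The only real subtlety is justifying the factor of two and the identification of the axis with $L$; this is routine once the problem has been reduced to a product of reflections in two ultraparallel geodesics, but it is the one step where a short direct calculation (or a careful citation to Beardon) is needed rather than formal manipulation.
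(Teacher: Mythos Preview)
Your argument is correct. The paper does not actually supply a proof of this lemma: it simply cites page~174 of Beardon \cite{Beardon}. Your reflection-decomposition argument---writing each half-turn as a product of reflections in $L$ and in the perpendicular $L_i$, cancelling the common $R_L$, and then invoking the standard classification of a product of reflections in two ultraparallel geodesics---is precisely the classical proof one finds in Beardon, so in effect you have reproduced the cited argument rather than taken a different route.
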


This is found on page~174 of Beardon.

\begin{lemma}\label{noshortgeods}Let $\Gamma$ be a non-cocompact, cofinite
Fuchsian group with finite-sided fundamental domain $\F$, and let $\ell >0$.
Choose disjoint $\Gamma$-equivariant cusp horoball neighborhoods, and let
$\F'$ denote (the closure of) the complement of these neighborhoods in $\F$.
Let $N(\F')=N_{\frac{\ell}{2}+\varepsilon}(\F')$ denote the closed
$(\frac{\ell}{2}+\varepsilon)$-neighborhood of $\F'$, and let
$G= \{ \gamma_0=\Id, \gamma_1, \ldots, \gamma_k \}$ be a set of elements
of $\Gamma$ such that the union of translates $\cup_{i=0}^k \gamma_i(\F)$
covers $N(\F')$. Then any geodesic of length at most $\ell$ on
$\Gamma\backslash\h$ must correspond to an element $\gamma_j \gamma_i^{-1}$
for some $0 \leq i, j \leq k$. \end{lemma}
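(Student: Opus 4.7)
The plan is to show that every closed geodesic $\bar g$ on $M=\Gamma\backslash\h$ of length $t\le\ell$ is the image of the axis of some element of $\Gamma$ of the form $\gamma_j\gamma_i^{-1}$. Fix a hyperbolic $\eta\in\Gamma$ whose axis projects to $\bar g$; since any $\Gamma$-conjugate of $\eta$ has the same projected axis, it suffices to produce a conjugate of $\eta$ of the desired form.

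The first step is to find, after possibly replacing $\eta$ by a $\Gamma$-conjugate, a point $m\in\F'$ lying on $\ax(\eta)$. A cusp horoball neighborhood on $M$ is isometric to a horocylinder foliated by horocycles; since horocycles are not geodesics, no closed geodesic on $M$ can be entirely contained in such a neighborhood, so $\bar g$ meets the image of $\F'$ under the projection $\pi:\h\to M$. Pick $\bar m$ in this intersection, lift $\bar m$ along a $\Gamma$-translate of $\ax(\eta)$ to a point $m\in\F'$, and conjugate $\eta$ so that $m$ lies on its axis. By sliding $\bar m$ slightly along $\bar g$ within the thick part, we may further assume that $m$ is not an elliptic fixed point of $\Gamma$ and that the translates of $\F$ appearing in the next step are unique.

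Let $p$ be the point of $\ax(\eta)$ at signed arc length $-t/2$ from $m$ (with the orientation given by $\eta$), so that $\eta(p)$ sits at signed arc length $+t/2$. Both $p$ and $\eta(p)$ then lie within distance $t/2\le\ell/2$ of $m\in\F'$, hence both belong to $N_{\ell/2}(\F')\subseteq N(\F')$. The covering hypothesis supplies indices $i,j$ with $p\in\gamma_i(\F)$ and $\eta(p)\in\gamma_j(\F)$, so the two points $\gamma_i^{-1}(p)$ and $\gamma_j^{-1}\eta(p)$ both lie in $\F$ and project to the same point of $M$ (they are related by $\gamma_j^{-1}\eta\gamma_i\in\Gamma$). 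By our genericity assumption they are interior points of $\F$ and must therefore coincide, which yields $\eta(p)=\gamma_j\gamma_i^{-1}(p)$. Since $p$ is not fixed by any nontrivial element of $\Gamma$, this forces $\eta=\gamma_j\gamma_i^{-1}$. The substantive obstacle is the first step, establishing that $\bar g$ meets $\pi(\F')$ so that a lift into $\F'$ on a translate of the axis is available; after that, the covering hypothesis reduces the rest to bookkeeping about which translates of $\F$ contain the endpoints of a fundamental axis segment centered at $m$.
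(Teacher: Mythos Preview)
Your proof is correct and follows essentially the same approach as the paper's: lift the closed geodesic to an axis meeting $\F'$, take the midpoint $m$ there, locate the two endpoints of a fundamental segment at distance $t/2$ inside $N(\F')$, and use a genericity perturbation plus the covering hypothesis to identify the hyperbolic element as $\gamma_j\gamma_i^{-1}$. The only cosmetic difference is that you perturb the midpoint $m$ to force $p$ and $\eta(p)$ into interiors of translates (so the stabilizer is trivial), whereas the paper keeps the midpoint fixed and perturbs the endpoint $p_1$ by at most $\varepsilon/2$, which is why the $\varepsilon$ in $N_{\ell/2+\varepsilon}(\F')$ is actually used there; in your version the $\varepsilon$-slack is not needed and the endpoints already lie in $N_{\ell/2}(\F')\subseteq N(\F')$.
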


\begin{proof}Let $\beta$ be a closed geodesic on $\Gamma\backslash\h$ with
length $ d \leq \ell$. Consider the lifts of $\beta$ to the universal cover
$\h$. Since $\beta$ is not peripheral, it cannot lie entirely in a single
cusp neighborhood, and since $\beta$ is path-connected, it cannot lie
entirely in the union of the disjoint cusp neighborhoods. Thus each lift
of $\beta$ has a point outside of the union of the equivariant horoball
neighborhoods. There must be one lift which intersects $\F'$; let $\beta'$
be such a lift, and choose a point $p_0 \in \beta' \cap \F'$.

The geodesic $\beta'$ is the axis of a hyperbolic isometry
$\gamma \in \Gamma$ with translation length $d$. There are two points
on $\beta'$, $p_1$ and $p_2$, each distance $\frac{d}{2}$ away from
$p_0$, such that $\gamma(p_1) = p_2$. If either $p_1$ or $p_2$ is fixed
by a finite order element of $\Gamma$, make a different choice of $p_1$
which is still on the axis of $\gamma$ and within an
$\frac{\varepsilon}{2}$-neighborhood of the inital choice, such that
$p_1$ and $p_2 = \gamma(p_1)$ both have trivial stabilizers. Note that
$p_1, p_2 \in N_{\frac{\ell}{2}+\varepsilon}(\F')$, and $p_1$ and $p_2$ are
$\Gamma$-equivalent. As such, there exists a point $q \in \F$ and there
exist group elements $\gamma_{i_1}, \gamma_{i_2} \in G$ such that
$p_1 = \gamma_{i_1}(q)$ and $p_2 = \gamma_{i_2}(q)$. Since
$q=\gamma_{i_1}^{-1}(p_1)$, it follows that
$p_2 = \gamma_{i_2} \circ \gamma_{i_1}^{-1}(p_1)$. Since $p_1$ and
$p_2$ were chosen to have trivial stabilizers, the only element of
$\Gamma$ which sends $p_1$ to $p_2$ is $\gamma$, and so
$\gamma = \gamma_{i_2} \circ \gamma_{i_1}^{-1}$. Hence the original
geodesic corresponds to an element $\gamma_j \gamma_i^{-1}$, for some
$0 \leq i, j \leq k$, as required. \end{proof}

\begin{lemma}\label{coarea} A finite coarea, genus zero Fuchsian group $\Gamma$
with $t$ $\Gamma$-inequivalent cusps, and $r$ cone points of orders
$m_1, \ldots, m_r$ respectively has coarea
\[ 2\pi \left( -2 + t + \sum_{i=1}^r \left( 1 - \frac{1}{m_i} \right) \right).\]
\end{lemma}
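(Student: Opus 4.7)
The plan is to derive the formula from the classical Gauss--Bonnet theorem for hyperbolic polygons applied to a fundamental domain for $\Gamma$, together with a count of vertex cycles under the side-pairings. This is the standard route for computing orbifold area, and nothing more sophisticated is needed.

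First, I would select a convex, finite-sided fundamental polygon $\F$ for $\Gamma$ in $\h$, chosen so that its vertex set includes all cusp representatives and all elliptic fixed points of $\Gamma$ lying on $\overline{\F}$; such a choice is always possible since a Dirichlet domain suffices after subdividing if necessary. Let $n$ denote the number of sides. Applying the hyperbolic Gauss--Bonnet formula (for a hyperbolic $n$-gon with interior angles $\alpha_1,\ldots,\alpha_n$, possibly including ideal vertices where the angle is $0$) gives
\[
\area(\F) \;=\; (n-2)\pi - \sum_{j=1}^n \alpha_j.
\]

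Second, I would partition the $n$ vertices into cycles under the side-pairing relations of $\Gamma$. The cycles fall into three classes: the $t$ cusp cycles, with angle sum $0$ each; the $r$ elliptic cycles, where the cycle corresponding to the cone point of order $m_i$ contributes angle sum $2\pi/m_i$; and some number $e$ of accidental (regular) cycles, each contributing angle sum $2\pi$. Thus $\sum_j\alpha_j = 2\pi\sum_{i=1}^r 1/m_i + 2\pi e$. To eliminate $n$ and $e$, I would use the fact that the side-pairings give a CW decomposition of the closed surface $\overline{\Gamma\backslash\h}$ (compactified at the cusps) with $V=t+r+e$ vertices, $E=n/2$ edges, and $F=1$ face. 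Since $\Gamma$ has genus zero, $\chi(\overline{\Gamma\backslash\h})=2$, so $t+r+e-n/2+1=2$, giving $n=2(t+r+e-1)$.

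Substituting back into Gauss--Bonnet yields
\[
\area(\F) = 2\pi(t+r+e-2) - 2\pi\sum_{i=1}^r \frac{1}{m_i} - 2\pi e = 2\pi\left(-2+t+\sum_{i=1}^r\left(1-\frac{1}{m_i}\right)\right),
\]
and since $\area(\F)$ equals the coarea of $\Gamma$, the claim follows. The only delicate step is justifying the vertex-cycle bookkeeping, in particular guaranteeing that every elliptic fixed point appears as a vertex and that each cycle is of exactly one of the three types listed; this is standard but must be set up carefully. An alternative, essentially equivalent, route would be to quote Gauss--Bonnet for hyperbolic 2-orbifolds in the form $\area=-2\pi\chi^{\mathrm{orb}}$ with $\chi^{\mathrm{orb}}=2-2g-t-\sum(1-1/m_i)$ and specialize to $g=0$, which collapses the proof to a single line but hides the combinatorics of the fundamental polygon.
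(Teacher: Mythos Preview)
Your argument is correct and is the standard Gauss--Bonnet/vertex-cycle derivation of the Siegel coarea formula. The paper itself does not give a proof at all: it simply cites Theorem~10.4.3 of Beardon \cite{Beardon} and notes that specializing to $g=0$ turns the $2(g-1)$ term into $-2$. What you have written is essentially the proof that sits behind that citation, so there is no conflict---you have just unpacked what the paper outsources. Your closing remark about quoting $\area=-2\pi\chi^{\mathrm{orb}}$ directly is exactly the spirit of the paper's treatment.
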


This is stated as Theorem~10.4.3 of Beardon, with our $t$ corresponding to $s$
there, and the quantity referred to as $t$ there always being $0$ in our
examples. In this paper, the genus $g$ will always equal 0, and so the term
$2(g-1)$ in the Theorem~10.4.3 formula becomes simply $-2$ for us.

\section{The algorithm}\label{s:algorithm}

In this section, we state the algorithm which is used to compute the Cheeger
constant of our examples. This is an adapted version of the algorithm given
in \cite{Benson1}; in particular, it has been simplified to reflect the fact
that all the present examples are non-compact. We will keep track of two
quantities: $H$ will denote the current best estimate of the Cheeger
constant $h$, and begins by taking the value $H=1$ because $M$ is assumed
to have cusps and the isoperimetric ratio of a cusp neighborhood is $1$;
$U$ represents the current upper bound on the total length of geodesics
(or curves equidistant from a collection of geodesics) that could possibly
result in a splitting which reduces $H$, and begins with the value
$U = \area(M)/2$.

\begin{enumerate}
\item First, set $H=1$ and $U= \area(M)/2$.
\item\label{step2} Select a collection $\{ \gamma^{i_1}, \ldots, \gamma^{i_j} \}$
of geodesics which split $M$ into two pieces $A$ and $B$, and which have
total length $\ell(\partial A) = \ell(\partial B)$ no greater than $U$.
\item If $\area(A) = \area(B)$, then compute
\begin{align*}
H_0 = h^*(A) = \dfrac{\ell(\partial A)}{\area(A)} = h^*(B) =
\dfrac{\ell(\partial B)}{\area(B)},
\end{align*}
let $s=0$, and proceed to Step~\ref{step5}.
\item If $\area(A) \neq \area(B)$, without loss of generality let $A$ denote
the component with lesser area. Determine the minimum distance $d_{i_j}$
perpendicular from the geodesics into $B$ before the neighborhoods
intersect, and minimize the maximum of
\begin{align*}
h^*(A_s) = \dfrac{\ell(\partial A) \cosh{(s)}}{\area(A)+\ell(\partial A)
\sinh{(s)}} \mbox{ and } h^*(B_s) =
\dfrac{\ell(\partial B) \cosh{(s)}}{\area(B)-\ell(\partial B)\sinh{(s)}}.
\end{align*}
Let this minimum be $H_0$ and record the value of $s$ for which this minimum
occurs.
\item\label{step5} If $H_0 < H$, then redefine $H=H_0$ and record the
collection $( \{ \gamma^{i_1}, \ldots, \gamma^{i_j} \}, s)$. If $H=H_0$,
add the collection to the list of collections which achieve $H$.
If $H_0 > H$, do nothing.
\item If $H \area(M)/2 < U$, redefine $U$ as $H \area(M)/2$; if not, leave
$U$ unchanged.
\item Return to Step~\ref{step2} until no further collections of geodesics
satisfying the criterion in Step~\ref{step2} can be found.
\end{enumerate}

In practice, we will present a splitting by geodesics, and then argue why
any other collection of geodesics which has length less than $U$ does not
result in a lesser value of $H$. As part of this, we will also check that
no union of geodesic arcs with endpoints at cone points produces a lesser
value of $H$.

To illustrate the general method, we show in detail the computation of the
Cheeger constants of the surfaces $M_{11}$ and $N_{11}$. We also show the
computation of the Cheeger constant of $M_{17}$ as one example where the
Cheeger constant is realized by a curve equidistant from a geodesic, rather
than by the geodesic itself.

\subsection{Example: $M_{11}$}\label{s:examples}

$M_{11}$ is the quotient of $\h$ by the normalizer $N(\Gamma_0(11))$ in
$\psl2\R$ of the congruence subgroup $\Gamma_0(11)$. A Ford domain $\F$
for $N(\Gamma_0(11))$ is given below in Figure~\ref{fig:domain01}.

\begin{figure}
\includegraphics[scale=0.75]{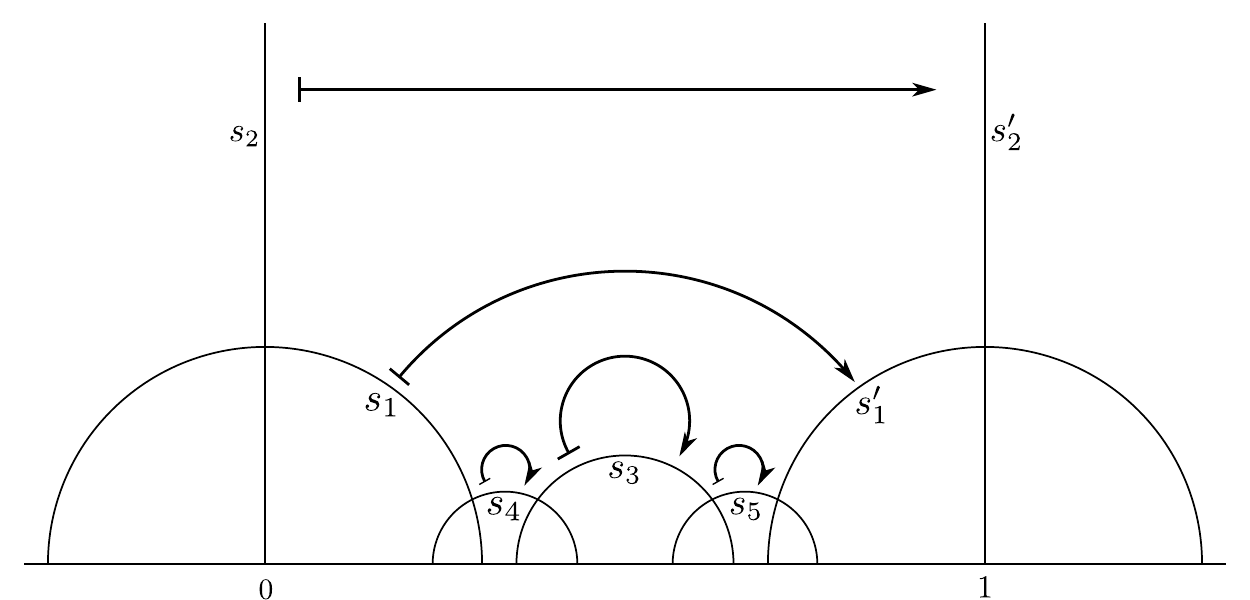}
\caption{\label{fig:domain01}Ford domain $\F$ for $N(\Gamma_0(11))$.}
\end{figure}

The domain $\F$ has area $2\pi$, and the side-pairing elements which
generate $N(\Gamma_0(11))$ are:
\begin{align*}
g_1 = \begin{pmatrix}\sqrt{11} & \frac{-1}{\sqrt{11}} \\
\sqrt{11} & 0 \end{pmatrix},\ g_2 =
\begin{pmatrix}1 & 1 \\ 0 & 1 \end{pmatrix},\ g_3
= \begin{pmatrix}\sqrt{11} & \frac{-6}{\sqrt{11}} \\
2\sqrt{11} & -\sqrt{11} \end{pmatrix},\ g_4 =
\begin{pmatrix}\sqrt{11} & \frac{-4}{\sqrt{11}} \\
3\sqrt{11} & -\sqrt{11} \end{pmatrix},\ g_5 =
\begin{pmatrix}2\sqrt{11} & \frac{-15}{\sqrt{11}} \\
3\sqrt{11} & -2\sqrt{11} \end{pmatrix}.
\end{align*}

The elements of $N(\Gamma_0(11))$ take two forms: those that belong to
$\psl2\Z$ have the form $\begin{pmatrix}a & b \\ 11c & d\end{pmatrix}$,
where $a, b, c, d \in \Z$; those that are not have the form
$\begin{pmatrix}x\sqrt{11} & \frac{y}{\sqrt{11}} \\
z\sqrt{11} & w\sqrt{11} \end{pmatrix}$,
where $x,y,z,w \in \Z$. As such, the traces of elements in $N(\Gamma_0(11))$
are either integers, or integer multiples of $\sqrt{11}$.

\begin{figure}
\includegraphics[scale=0.75]{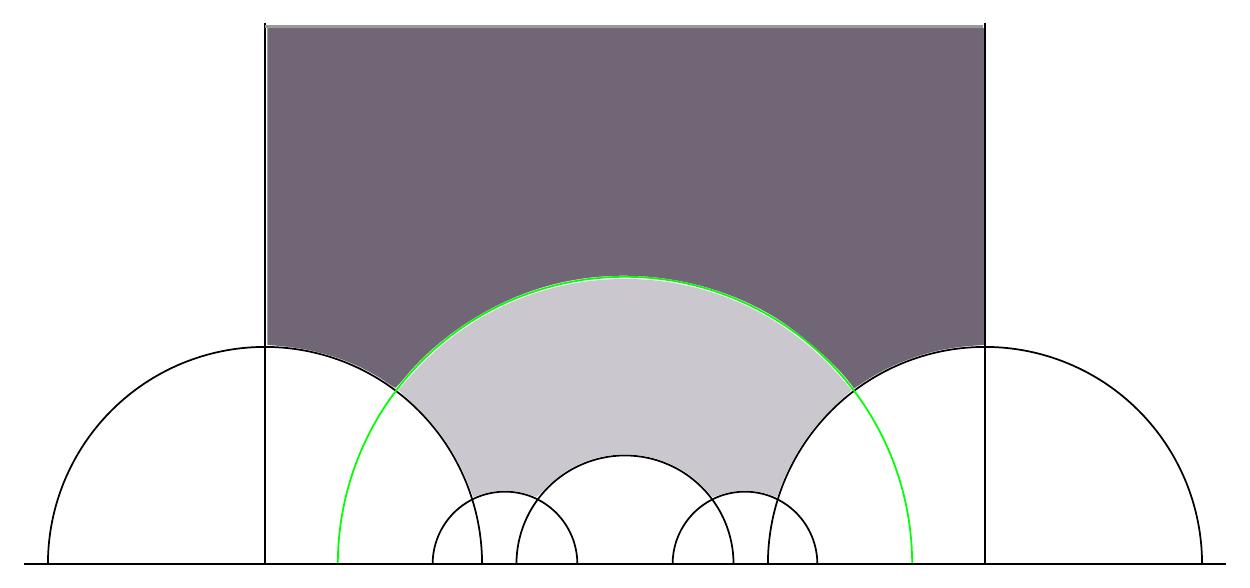}
\caption{\label{fig:domain02}The axis of $g_1$ separates $\F$.}
\end{figure}

To apply the algorithm, since $M_{11}$ has cusps, we set $U = \pi$ and $H=1$.
We consider the geodesic corresponding to the hyperbolic element $g_1$. This
element pairs the two sides $s_1$ and $s_1'$. Furthermore, the axis
$\ax(g_1)$ meets both $s_1$ and $s_1'$ orthogonally; this axis divides $\F$
into two pieces: the region $A$ above the axis (dark) and $B$ below (light);
each dark side is paired with another, and each light side is paired with
another (see Figure~\ref{fig:domain02}). Each of these two pieces has
area $\pi$, and the length of the separating geodesic corresponding to
$g_1$ is $2\arccosh{\left( \frac{\sqrt{11}}{2} \right)} \approx 2.18464$.
Hence our $H_0 = h^*(A) = h^*(B) = \frac{2.18464}{\pi} \approx 0.695394$.
This is less than $H=1$, so this value becomes our new $H$, and we set
$U=H\area(M)/2 \approx 2.18464$.

Since in this case, the two pieces have equal area, the Cheeger constant can
only be decreased if there exist simple closed curves, or a union of
geodesic arcs between cone points, which separate $M_{11}$ and which
have total length less than $U$, and hence the corresponding elements
have trace less than $\sqrt{11}$. We apply Lemma~\ref{noshortgeods} with
$\ell \approx 2.18464$, and we take $ \frac{\ell}{2}+\varepsilon = 1.1$.
We find that the set $G$ of elements whose translates cover the
$1.1$-neighborhood of $\F$ is
\begin{multline*}
G= \{ \Id, g_1, g_2, g_3, g_4, g_5, g_1^{-1}, g_2^{-1}, g_2^{-1}g_1,
g_2g_1^{-1}, g_1^{-1}g_5, g_4g_1^{-1}, g_5g_1, g_5g_3, g_1^{-1}g_3, \\
g_1^{-1}g_3g_4, g_1g_3, g_1g_3g_5, g_3g_1, g_3g_1g_4, g_3g_1^{-1},
g_3g_1^{-1}g_5 \}.
\end{multline*}
We search through elements of the form $\gamma_j \gamma_i^{-1}$ for
$\gamma_i, \gamma_j \in G$ for hyperbolic elements with trace less than
$\sqrt{11}$. We find that the only such elements are
\begin{align*}
\begin{pmatrix}5 & -1 \\ 11 & -2\end{pmatrix},\ \begin{pmatrix}9 & -5 \\
11 & -6\end{pmatrix},
\end{align*}
and their inverses. Each of these is a product of two involutions:
respectively, $g_3g_5g_3g_4$ and $g_5g_3g_4g_3$; and these elements are
conjugate to one another by $g_3$. As such, there is exactly one geodesic
arc on $M_{11}$, between two cone points, which corresponds to an element of
trace 3, and hence, by Lemma~\ref{invlength}, this arc has length
$\arccosh{\left( \frac{3}{2} \right)} \approx 0.962424$. The geodesic arc does
not intersect itself and its endpoints are distinct cone points, so the arc
does not separate $M_{11}$.

It remains to check whether there is another geodesic arc, between the same
cone points, which together with this one separates $M_{11}$. Specifically,
we seek geodesic arcs which are less than $2.18464-0.962424=1.22222$ in
length. However, by considering $1.25$-neighborhoods of each cone point,
we see that there is no other such geodesic arc. Furthermore, these
neighborhoods show that there are no two lifts of the same cone point
which are distance less than $U$ apart, and so there is no geodesic arc,
with both endpoints at the same cone points, of length less than $U$.
Hence we conclude that $h(M_{11}) \approx 0.695394$.

\subsection{Example: $N_{11}$}\label{sec 3.2}

The surface $N_{11}$ is the quotient of $\h$ by the index two subgroup
$\Gamma'\subset\Gamma$ generated by the following elements:

\begin{align*}
\gamma_1 = \begin{pmatrix}1 & 1 \\ 0 & 1 \end{pmatrix},\ \gamma_2 =
\begin{pmatrix}0 & \frac{-1}{\sqrt{11}} \\
\sqrt{11} & 0 \end{pmatrix},\ \gamma_3 =
\begin{pmatrix}\sqrt{11} & \frac{5}{\sqrt{11}} \\
2\sqrt{11} & \sqrt{11} \end{pmatrix},\ \gamma_4 =
\begin{pmatrix}10 & 3 \\ 33 & 10 \end{pmatrix},\ \gamma_5 =
\begin{pmatrix}23 & 8 \\ 66 & 23 \end{pmatrix}.
\end{align*}

A Ford domain, $Q$, for this group is given in Figure~\ref{fig:domain03};
for each $i$, $\gamma_i$ identifies side $s_i$ with $s_i'$. Note that these
sides $s_i$ are labeled differently than in the previous example.

\begin{figure}
\includegraphics[scale=0.75]{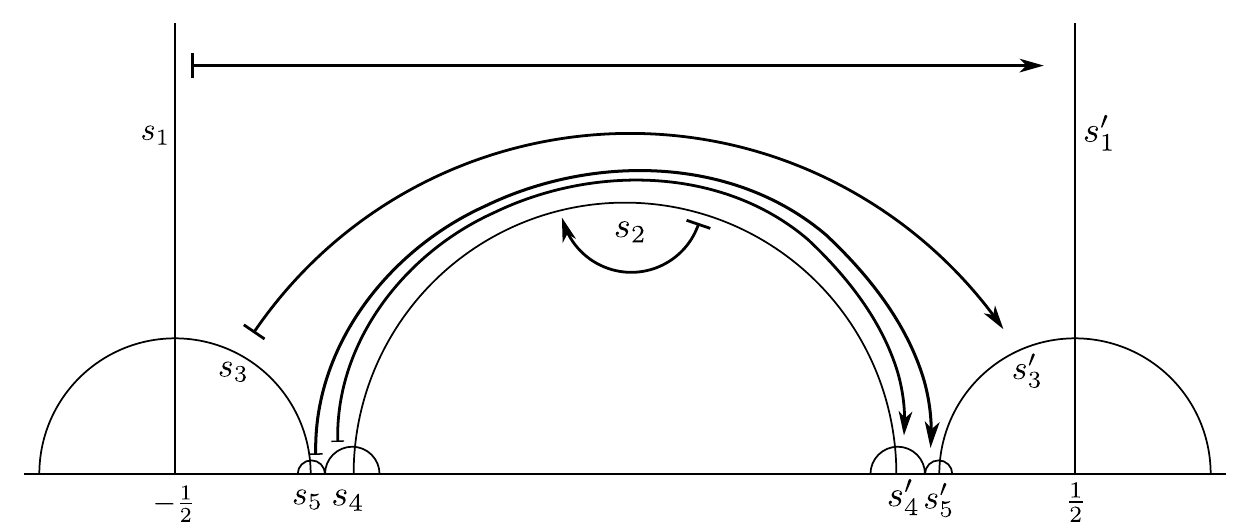}
\caption{\label{fig:domain03}Ford domain for $\Gamma'$.}
\end{figure}

Since $N_{11}$ has cusps, we set $U = 2\pi$ and $H=1$. We first choose to
study an element of $\Gamma'$ of trace 3, in particular the element
$\gamma_3 \gamma_2$. Since this element does not identify sides of $Q$,
we choose a different fundamental domain which has this element as a
side-pairing. We do this by taking the left half of $Q$, i.e., the part
of $Q$ with negative real part (henceforth called $Q^-$), and we apply
$\gamma_2$ to just $Q^-$. The resulting fundamental domain is shown in
Figure~\ref{fig:domain04}.

\begin{figure}
\includegraphics[scale=0.75]{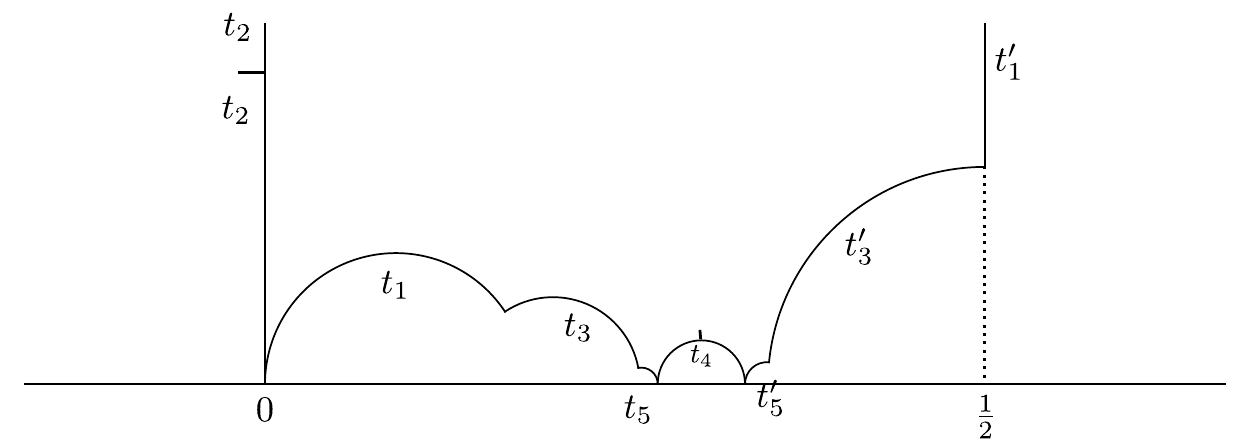}
\caption{\label{fig:domain04}Alternative fundamental domain for $\Gamma'$.}
\end{figure}

In this fundamental domain, the side $t_2$ is paired with itself by $\gamma_2$;
the sides $t_1$, $t_3$ and $t_5$ are identified with $t_1'$, $t_3'$ and $t_5'$
respectively, by $\gamma_1 \gamma_2$, $\gamma_3 \gamma_2$ and
$\gamma_5 \gamma_2$ respectively, and the side $t_4$ is paired with itself by
$\gamma_4 \gamma_2$. The axis of $\gamma_3 \gamma_2$ separates this fundamental
domain into dark and light pieces (see Figure~\ref{fig:domain05}).

\begin{figure}
\includegraphics[scale=0.75]{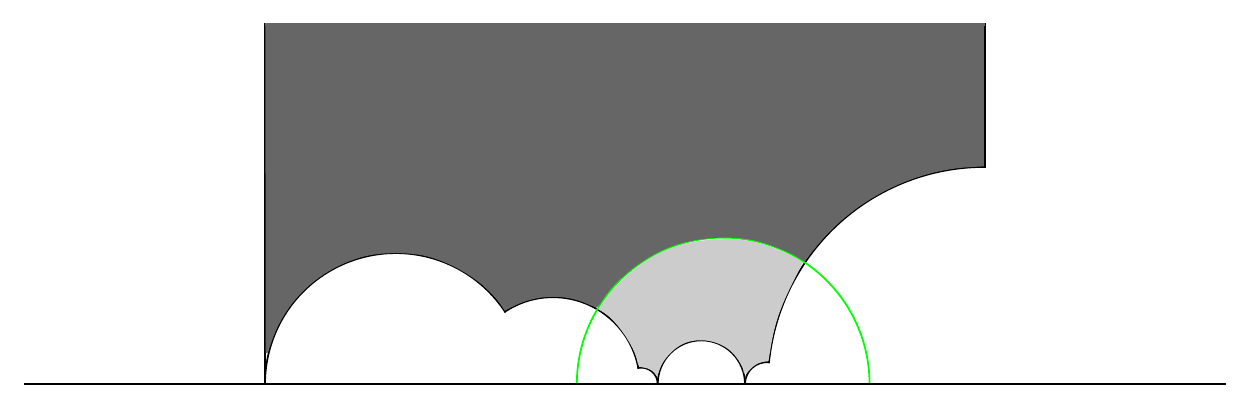}
\caption{\label{fig:domain05}The axis of $\gamma_3 \gamma_2$ separates
$N_{11}$.}
\end{figure}

Since $\gamma_3 \gamma_2$ has trace $3$, and its axis separates, it
corresponds to a geodesic of length
$2 \arccosh{\left( \frac{3}{2} \right)} \approx 1.92485$.
This geodesic separates $N_{11}$ into two pieces which each
have area $2\pi$. Our value of $H_0$ is therefore
$\frac{1.92485}{2\pi} \approx 0.306349$, and is equal to $h^*(A)$
and $h^*(B)$. This value is less than one, and hence becomes our
new $H$; we set $U = H\area(M)/2 \approx 1.92485$.

In this case, because the element has trace 3, there cannot be a shorter
geodesic simple closed curve on the surface, and because the two pieces
each have area $2\pi$, the minimum area is as large as possible. Therefore,
the only way in which this value could not be the true value of $h(N_{11})$
would be if there were arcs joining cone points, of total length less than
1.92485, which separate $N_{11}$. But by Lemma~\ref{invlength}, each pair
of cone points are at least distance $\frac{1.92485}{2}$ apart. We must
also account for the possibility that there is a geodesic arc with both
endpoints at the same cone points. However, since there was no such arc of
length less than $2.18$ on $M_{11}$, there cannot be one on $N_{11}$ of
length less than the present $U$. Hence we conclude that
$h(N_{11}) \approx 0.306349$.

\subsection{Other examples: the case $n=17$}\label{sec 3.3}

To illustrate other facets of the algorithm, we now detail the computation
of the Cheeger constant of $M_{17} = N(\Gamma_0(17))\backslash\h$ and of
$N_{17} = \Gamma'\backslash\h$ for the index 2 subgroup $\Gamma'$ of
$N(\Gamma_0(17))$ which is the rotation subgroup of a maximal arithmetic
reflection group.

Since $17$ is prime, the elements of $N(\Gamma_0(17))$ take two forms:
those that belong to $\psl2\Z$ have the form
$\begin{pmatrix}a & b \\ 17c & d\end{pmatrix}$, where $a, b, c, d \in \Z$;
those that are not have the form
$\begin{pmatrix}x\sqrt{17} & \frac{y}{\sqrt{17}} \\
z\sqrt{17} & w\sqrt{17} \end{pmatrix}$,
where $x,y,z,w \in \Z$. As such, the traces of elements in $N(\Gamma_0(17))$
are either integers, or integer multiples of $\sqrt{17}$. By considering the
congruence conditions on $a$ and $d$ from the restraint $ad-17bc=1$, and so
$ad \equiv 1$ mod $17$, we see that the least non-parabolic trace in
$\Gamma_0(17)$ is 5. Hence the shortest geodesic has trace $\sqrt{17}$,
and one such element is
$\begin{pmatrix} \sqrt{17} & \frac{-1}{\sqrt{17}} \\
\sqrt{17} & 0 \end{pmatrix}$.
This divides the surface into pieces $A$ and $B$ of areas $\pi$ and $2\pi$
(see Figure~\ref{fig:equid17}), and since these areas are not equal, we
proceed to extend $A$ into $B$ using closed curves equidistant from the
geodesic.

\begin{figure}
\includegraphics[scale=0.85]{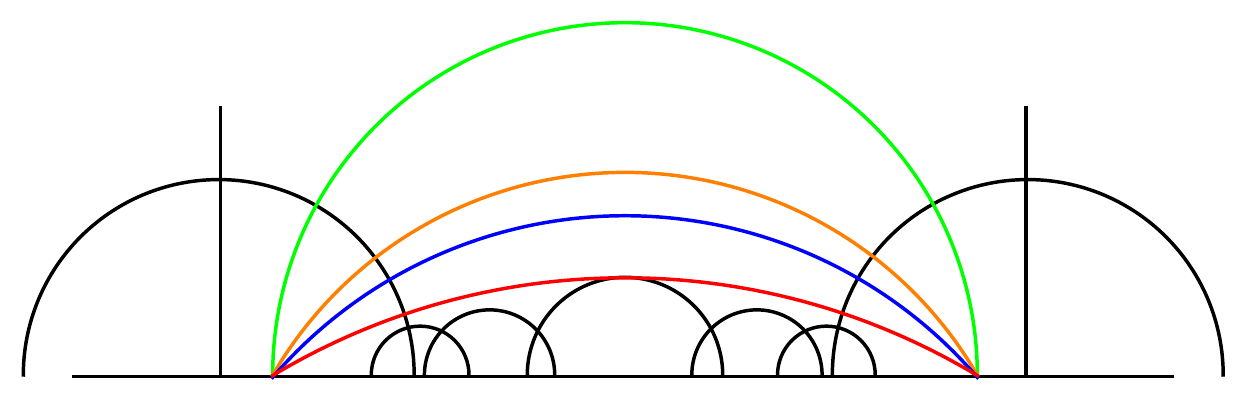}
\caption{\label{fig:equid17}Equidistant curves from the geodesic.}
\end{figure}

We let $s$ denote the distance extended into $B$. With reference to the
equidistant curves in the Ford domain in Figure~\ref{fig:equid17}, we find
that we may extend a distance of 1.28247 before the equidistant curve fails
to be isotopic to the geodesic. Using the equations
\begin{align*}
h^*(A_s) = \frac{\ell(\partial A) \cosh{(s)}}{\pi + \ell(\partial A)
\sinh{(s)}} \ \mbox{and} \ h^*(B_s) = \frac{\ell(\partial A)
\cosh{(s)}}{2\pi - \ell(\partial A) \sinh{(s)}}
\end{align*}
we find that $h^*(A_s)$ is minimized at $s= 0.779274$, and that
$h^*(A_s) = h^*(B_s)$ when $s = 0.55247$. Thus \\ $\max(h^*(A_s),h^*(B_s))$
is minimized at $s=0.55247$, and here $H_0 = h^*(A_s) = h^*(B_s) = 0.663522$.
Since this last number is less than $H=1$, we set $H=0.663522$ and
$U = 3.12677$.

We next note that any other geodesic of trace $\sqrt{17}$ would produce a
similar analysis. This is because the area formula (see Lemma~\ref{coarea})
means that such a
geodesic, if it separates, must divide the surface into subsurfaces of
areas $\pi$ and $2\pi$ respectively. Furthermore, as discussed above, the
next smallest possible trace for a hyperbolic element in this group is $5$,
which corresponds to a geodesic of length $3.1336$. This is larger than the
present value of $U$, and so we need not consider any such elements.

It therefore remains to consider geodesic arcs between cone points. By
considering neighborhoods of the fixed points of each involution and
their translates by the group, we see that the minimum trace for a product
of two involutions is $5$. By Lemma~\ref{invlength}, this means that the
union of two geodesic arcs between distinct cone points must have length
which exceeds $U$. It then remains to consider geodesic arcs which begin
and end at the same cone point. We find one product of conjugate involutions
with trace $19$; by Lemma~\ref{invlength}, this gives a geodesic arc of
length $2.94166$. This arc cuts off a subsurface of area less than $\pi$,
and so the corresponding value of $h^*$ is at least
$2.94166/\pi \approx 0.936359$. All other geodesic arcs have length exceeding
$U$. We therefore conclude that $h(M_{17}) \approx 0.663522$.

The surface $N_{17}$ is the quotient of $\h$ by an index two subgroup of
$N(\Gamma_0(17))$. This surface has area $6\pi$, so we begin by setting
$H=1$ and $U=3\pi$. We find a geodesic of trace $6$ which divides it into
two equal pieces, each of area $3\pi$. Here,
$H_0 = h^*(A) = h^*(B) \approx 0.374067$; this value becomes our new $H$,
and we set $U \approx 3.52549$. We find no shorter geodesics than this,
and we also find no combinations of arcs between cone points of total
length less than $U$. Thus $h(N_{17}) \approx 0.374067$.

\subsection{Other examples: the case $n=33$}\label{sec 3.4}

Since it exhibits different characteristics from the other values of $n$,
we will describe the results of the algorithm for the case of $n=33$.
Before doing this, we first describe how it differs from the other examples.

In all the cases we consider, the Cheeger constant of the minimal surface
$M_n$, corresponding to the maximal Fuchsian group $N(\Gamma_0(n))$, is
realized by either a closed geodesic (as in the case $n=11$) or an
equidistant neighborhood (as in the case $n=17$). In the other nine
cases where $n \neq 33$, the Cheeger constant of $N_n$ is realized as
a geodesic which passes through two of the cone points of $M_n$ which
do not lift (i.e., the involutions are not found in the index 2 subgroup).
In the case $n=33$, these cone points are a little farther apart, and so
the Cheeger constant of $N_{33}$ is realized by a different closed geodesic.

In the case of $M_{33}$, we find that the Cheeger constant is
$h(M_{33}) \approx 0.740622$ and this is realized by an equidistant curve
to a geodesic of trace $\sqrt{33}$. We omit the details of this
computation as they are similar to those of $M_{17}$.

To find $h(N_{33})$, we begin by taking the geodesic described above. It
has trace 14, and divides the surface into two pieces, each of area $4\pi$.
The corresponding Cheeger estimate is $H \approx 0.419201$, and
$U \approx 5.26783$. We now apply Lemma~\ref{noshortgeods} with
$\frac{\ell}{2}+\varepsilon = 2.64$, and search for elements with trace
less than 14. Up to conjugation, we find six geodesics, with traces
$\sqrt{33}$, $4\sqrt{3}$, $2\sqrt{33}$, $7\sqrt{3}$, $13$, and $8\sqrt{3}$
respectively. In all cases but trace 13, the geodesic does not divide the
surface into pieces of equal area, and hence the corresponding $H_0$ is
larger than $H$. In the case of the geodesic of trace 13, the geodesic
divides the surface into two pieces of equal area $4\pi$, and so the
corresponding $H_0 \approx 0.407274$ (see Figure~\ref{fig:n33}). Since
we already enumerated all of the closed geodesics of trace less than 14,
it remains to check that there are no combinations of arcs between cone
points with total length less than $U \approx 5.11796$. There are no such
arcs, so we conclude that $h(N_{33}) \approx 0.407274$.

\begin{figure}
\includegraphics[scale=0.995]{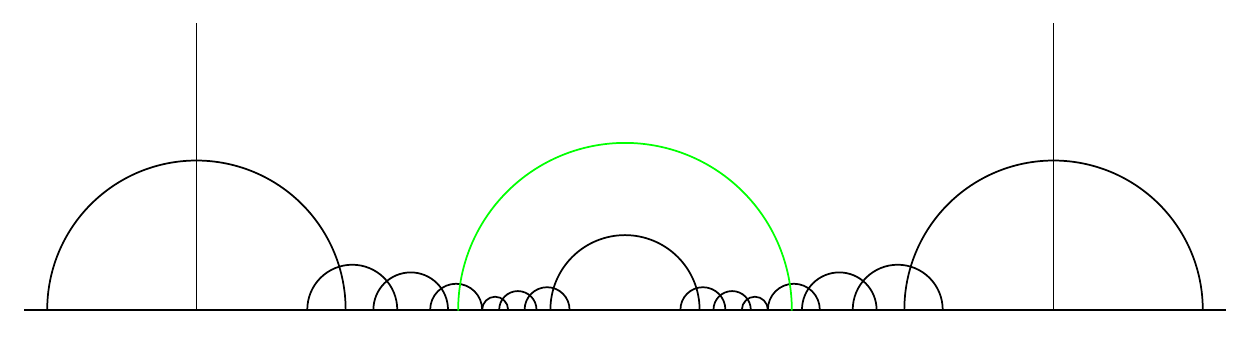}
\caption{\label{fig:n33}A trace 13 geodesic splits $N_{33}$ into two pieces of
area $4\pi$.}
\end{figure}

\subsection{Summary of results}\label{s:results}

Table~\ref{tab:Cheeger constants} lists numerical values of the Cheeger
constants of a collection of hyperbolic surfaces $M_n$ and $N_n$. These
values of the Cheeger constants result from the following considerations:

\begin{table}
\caption{\label{tab:Cheeger constants}Cheeger constants. The asterisk
$\mbox{}^\ast$ denotes cases where $N_n$ is a congruence surface.}
\begin{tabular}{l|cc}
$\,n$ & $h(M_n)$ & $h(N_n)$ \\
\hline
11 & 0.695394 & 0.306349 \\
$15^\ast\!$ & 0.814548 & 0.498728 \\
$17^\ast\!$ & 0.663522 & 0.374067 \\
19 & 0.672365 & 0.183809 \\
22 & 0.717333 & 0.279467 \\
26 & 0.719348 & 0.239543 \\
33 & 0.740622 & 0.407274 \\
42 & 0.772596 & 0.328406 \\
55 & 0.485381 & 0.204233 \\
66 & 0.472607 & 0.218937 \\
\end{tabular}
\end{table}

\begin{itemize}
\item \underline{$n=11$:} $M_{11}$ has area $2\pi$ and a geodesic with trace
$\sqrt{11}$ which divides it into two equal pieces; thus
$h(M_{11}) \approx 0.695394$. $N_{11}$ has area $4\pi$ and a geodesic
with trace $3$ which divides it into two equal pieces; thus
$h(N_{11}) \approx 0.306349$.
\item \underline{$n=15$:} $M_{15}$ has area $2\pi$ and a geodesic with trace
$\sqrt{15}$ which divides it into two equal pieces; thus
$h(M_{15}) \approx 0.814548$. $N_{15}$ has area $4\pi$ and a geodesic with
trace $5$ which divides it into two equal pieces; thus
$h(N_{15}) \approx 0.498728$.
\item \underline{$n=17$:} $M_{17}$ has area $3\pi$ and a geodesic with
trace $\sqrt{17}$ which divides it into two pieces of areas $\pi$ and
$2\pi$; an equidistant curve from this at distance $s=0.55247$ gives
$h(M_{17}) \approx 0.663522$. $N_{17}$ has area $6\pi$ and a geodesic
with trace $6$ which divides it into two equal pieces; thus
$h(N_{17}) \approx 0.374067$.
\item \underline{$n=19$:} $M_{19}$ has area $10\pi/3$ and a geodesic with
trace $\sqrt{19}$ which divides it into two pieces of areas $\pi$ and
$7\pi/3$; an equidistant curve from this at distance $s=0.685239$ gives
$h(M_{19}) \approx 0.672365$. $N_{19}$ has area $20\pi/3$ and a geodesic
with trace $3$ which divides it into equal pieces; thus
$h(N_{19}) \approx 0.183809$.
\item \underline{$n=22$:} $M_{22}$ has area $3\pi$ and a geodesic with
trace $\sqrt{22}$ dividing it into two pieces of areas $\pi$ and $2\pi$;
an equidistant curve from this at distance $s=0.503269$ gives
$h(M_{22}) \approx 0.717333$. $N_{22}$ has area $6\pi$ and a geodesic with
trace $4$ which divides it into equal pieces; thus
$h(N_{22}) \approx 0.279467$.
\item \underline{$n=26$:} $M_{26}$ has area $7\pi/2$ and a geodesic with
trace $\sqrt{26}$ dividing it into two pieces of areas $\pi$ and $5\pi/2$;
an equidistant curve from this at distance $s=0.686576$ gives
$h(M_{26}) \approx 0.719348$. $N_{26}$ has area $7\pi$ and a geodesic
with trace $4$ which divides it into equal pieces; thus
$h(N_{26}) \approx 0.239543$.
\item \underline{$n=33$:} $M_{33}$ has area $4\pi$ and a geodesic with trace
$\sqrt{33}$ dividing it into two pieces of areas $\pi$ and $3\pi$; an
equidistant curve from this at distance $s=0.820071$ gives
$h(M_{33}) \approx 0.740622$. $N_{33}$ has area $8\pi$ and a geodesic with
trace $13$ which divides it into equal pieces; thus
$h(N_{33}) \approx 0.407274$.
\item \underline{$n=42$:} $M_{42}$ has area $4\pi$ and a geodesic with trace
$\sqrt{42}$ dividing it into two pieces of areas $\pi$ and $3\pi$; an
equidistant curve from this at distance $s=0.772596$ gives
$h(M_{42}) \approx 0.771086$. $N_{42}$ has area $8\pi$ and a geodesic with
trace $8$ which divides it into equal pieces; thus
$h(N_{42}) \approx 0.328406$.
\item \underline{$n=55$:} $M_{55}$ has area $6\pi$ and a geodesic with trace
$3\sqrt{11}$ dividing it into two equal pieces of areas $3\pi$ and $3\pi$;
this gives $h(M_{55}) \approx 0.485381$. $N_{55}$ has area $12\pi$ and a
geodesic with trace $7$ which divides it into equal pieces; thus
$h(N_{55}) \approx 0.204233$.
\item \underline{$n=66$:} $M_{66}$ has area $6\pi$ and a geodesic with trace
$2\sqrt{22}$ dividing it into two equal pieces of areas $3\pi$ and $3\pi$;
this gives $h(M_{66}) \approx 0.472607$. $N_{66}$ has area $12\pi$ and a
geodesic with trace $8$ which divides it into equal pieces; thus
$h(N_{66}) \approx 0.218937$.
\end{itemize}

We note that the upper bounds for the Cheeger constants of $N_{22}$ and
$N_{26}$ which appear in \cite[Table~4.2]{LakelandThesis} are erroneous, as
they are underestimates of the true Cheeger constants.

\section{Maass cusp forms}\label{sec 4}

In this section, we compute Maass cusp forms on a collection of hyperbolic
surfaces $M_n$ and $N_n$ for various values of $n$.

Let $M=\Gamma\backslash\h$ be the quotient surface of a cofinite, but
non-cocompact Fuchsian group $\Gamma\subset\psl2\R$, and let
$\chi:\Gamma\to\C\setminus\{0\}$ be a multiplicative character.

\bigskip

A \emph{Maass form} on $(\Gamma,\chi)$ is a
\begin{enumerate}
\item[] real analytic, $f\in C^\infty(\h)$,
\item[] square-integrable, $f\in L^2(M)$,
\item[] automorphic, $\chi(\gamma)f(\gamma z)=f(z)\ \forall\gamma\in\Gamma$,
\item[] eigenfunction of the Laplace-Beltrami operator,
$-\Delta f(z)=\lambda f(z)$.
\end{enumerate}
If a Maass form on $(\Gamma,\chi)$ vanishes in all the cusps of $M$,
it is called a \emph{Maass cusp form}.

\bigskip

The Laplace-Beltrami operator is an essentially self-adjoint operator on $M$,
hence its eigenvalues are real. The spectral resolution on $M$ consists of
three parts, the zero eigenvalue $\lambda_0=0$, spanned by the
constant eigenfunction, the continuous spectrum $\lambda\in[1/4,\infty)$,
spanned by certain Maass forms, the so called Eisenstein series,
and a countable set of discrete eigenvalues
$0<\lambda_1\leq\lambda_2\leq\ldots$ which are spanned by Maass cusp forms
\cite{Roelcke1966,Roelcke1967}.

We are particularly interested in the first discrete eigenvalue $\lambda_1$
which we compute numerically using Hejhal's algorithm
\cite{Hej99}\ignore{, see appendix~\ref{sec A},}
together with strategies for finding eigenvalues
\cite[end of \S2 and \S3]{The05} and \cite{The12}.

We compute on arithmetic reflection groups and on finite index subgroups
thereof.
If $\Gamma'$ is a finite index rotation or reflection subgroup of $\Gamma$,
then $\Gamma'$ is invariant with respect to conjugation by $\gamma\in\Gamma$.
The surface $M'=\Gamma'\backslash\h$ has symmetries and can be tesselated by
non-overlapping copies of the surface $M=\Gamma\backslash\h$,
\begin{align*}
M'=\cup_{\Gamma/\Gamma'}\gamma M.
\end{align*}

The Laplace-Beltrami operator is invariant with respect to isometries, and in
particular to the operations of the symmetry group $\Gamma/\Gamma'$.
Maass forms on $(\Gamma',\Id)$ fall into symmetry classes,
where the symmetry classes are represented by multiplicative characters $\chi$,
\begin{align*}
\chi(\gamma)f(\gamma z)=f(z) \quad \forall\gamma\in\Gamma.
\end{align*}
By automorphy on $(\Gamma',\Id)$ we have
\begin{align*}
f(\gamma z)=f(z) \quad \forall\gamma\in\Gamma'.
\end{align*}
Consequently, a Maass form on $(\Gamma',\Id)$ is a Maass form on
$(\Gamma,\chi)$, where $\chi$ is a multiplicative character on $\Gamma$
with $\chi(\gamma)=1\ \forall\gamma\in\Gamma'$.
The symmetry of the Maass form is specified by the values of $\chi$ on
$\Gamma/\Gamma'$. Since every symmetry operation $\gamma\in\Gamma/\Gamma'$
has finite order, the multiplicative character takes values that are roots
of unity.

\subsection{Example: $M_{19}$ and $N_{19}$}
\label{sec 4.1}

The index two subgroup $\Gamma'$ of $\Gamma=N(\Gamma_0(19))$ is generated by
\begin{align*} &
\gamma_1=\begin{pmatrix} 1 & 1 \\ 0 & 1 \end{pmatrix},
\gamma_3=\begin{pmatrix} \sqrt{19} & \frac{9}{\sqrt{19}} \\
2\sqrt{19} & \sqrt{19} \end{pmatrix},
\gamma_4=\begin{pmatrix} \sqrt{19} & \frac{6}{\sqrt{19}} \\
3\sqrt{19} & \sqrt{19} \end{pmatrix},
\gamma_7\gamma_5\gamma_7\gamma_5=\begin{pmatrix} 39 & 10 \\
152 & 39 \end{pmatrix},
\\ &
\gamma_7\gamma_5\gamma_7\gamma_1\gamma_5=\begin{pmatrix} 37 & 9 \\
152 & 37 \end{pmatrix},
\gamma_7\gamma_5\gamma_7\gamma_3\gamma_5=\begin{pmatrix} 7\sqrt{19} &
\frac{31}{\sqrt{19}} \\ 30\sqrt{19} & 7\sqrt{19} \end{pmatrix},
\gamma_7\gamma_5\gamma_7\gamma_4\gamma_5=\begin{pmatrix} 0 &
-\frac{1}{\sqrt{19}} \\ \sqrt{19} & 0 \end{pmatrix},
\end{align*}
and $\Gamma'$ is invariant with respect to conjugation by
the involution and the reflection,
\begin{align*} &
\gamma_5=\begin{pmatrix} -\sqrt{19} & -\frac{5}{\sqrt{19}} \\
4\sqrt{19} & \sqrt{19} \end{pmatrix}
\quad \text{and} \quad
\gamma_7=\begin{pmatrix} -1 & 0 \\ 0 & 1 \end{pmatrix}.
\end{align*}
The conjugation relations of the generators are
\begin{align*}
& \gamma_5^{-1}\gamma_1\gamma_5=
(\gamma_7\gamma_5\gamma_7\gamma_5)^{-1}
(\gamma_7\gamma_5\gamma_7\gamma_1\gamma_5),
& \gamma_7^{-1}\gamma_1\gamma_7=\gamma_1^{-1}, \\
& \gamma_5^{-1}\gamma_3\gamma_5=
(\gamma_7\gamma_5\gamma_7\gamma_5)^{-1}
(\gamma_7\gamma_5\gamma_7\gamma_3\gamma_5),
& \gamma_7^{-1}\gamma_3\gamma_7=\gamma_3^{-1}, \\
& \gamma_5^{-1}\gamma_4\gamma_5=
(\gamma_7\gamma_5\gamma_7\gamma_5)^{-1}
(\gamma_7\gamma_5\gamma_7\gamma_4\gamma_5),
& \gamma_7^{-1}\gamma_4\gamma_7=\gamma_4^{-1}, \\
& \gamma_5^{-1}(\gamma_7\gamma_5\gamma_7\ \ \ \gamma_5)\gamma_5=
(\gamma_7\gamma_5\gamma_7\gamma_5)^{-1}\ \ \,,
& \gamma_7^{-1}(\gamma_7\gamma_5\gamma_7\ \ \ \gamma_5)\gamma_7=
(\gamma_7\gamma_5\gamma_7\ \ \ \gamma_5)^{-1}, \\
& \gamma_5^{-1}(\gamma_7\gamma_5\gamma_7\gamma_1\gamma_5)\gamma_5=
(\gamma_7\gamma_5\gamma_7\gamma_5)^{-1}\gamma_1,
& \gamma_7^{-1}(\gamma_7\gamma_5\gamma_7\gamma_1\gamma_5)\gamma_7=
(\gamma_7\gamma_5\gamma_7\gamma_1\gamma_5)^{-1}, \\
& \gamma_5^{-1}(\gamma_7\gamma_5\gamma_7\gamma_3\gamma_5)\gamma_5=
(\gamma_7\gamma_5\gamma_7\gamma_5)^{-1}\gamma_3,
& \gamma_7^{-1}(\gamma_7\gamma_5\gamma_7\gamma_3\gamma_5)\gamma_7=
(\gamma_7\gamma_5\gamma_7\gamma_3\gamma_5)^{-1}, \\
& \gamma_5^{-1}(\gamma_7\gamma_5\gamma_7\gamma_4\gamma_5)\gamma_5=
(\gamma_7\gamma_5\gamma_7\gamma_5)^{-1}\gamma_4,
& \gamma_7^{-1}(\gamma_7\gamma_5\gamma_7\gamma_4\gamma_5)\gamma_7=
(\gamma_7\gamma_5\gamma_7\gamma_4\gamma_5)^{-1}.
\end{align*}

The surface $N_{19}=\Gamma'\backslash\h$ has two inequivalent cusps.
A fundamental domain for $N_{19}$ is displayed in Figure~\ref{fig:Dfd19}.
\begin{figure}
\includegraphics[width=.33\textwidth]{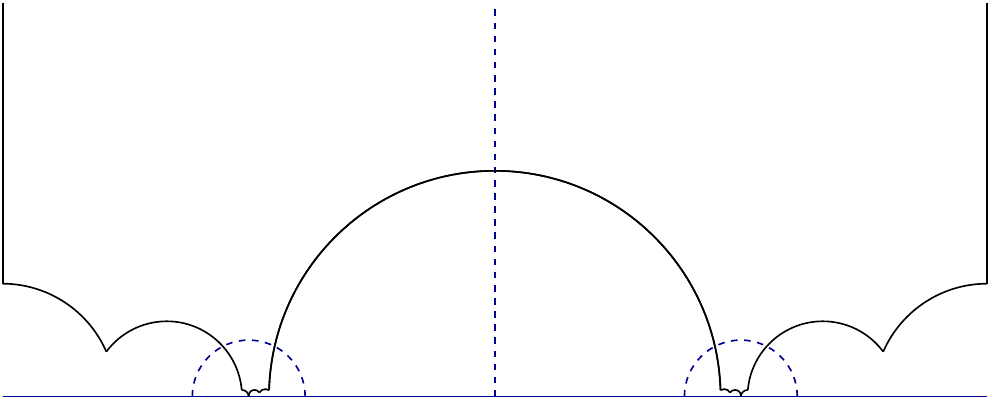}
\caption{\label{fig:Dfd19}A fundamental domain for $N_{19}$. $N_{19}$ is
symmetric with respect to the involution $\gamma_5$ and the reflection
$\gamma_7$. The symmetries are indicated as dotted lines.}
\end{figure}

Maass forms on $(\Gamma',\Id)$ fall into four symmetry classes,
\begin{itemize}
\item[\texttt{(++)}] $f(\gamma_7z)=f(z)$, $f(\gamma_5z)=f(z)$,
\item[\texttt{(+-)}] $f(\gamma_7z)=f(z)$, $f(\gamma_5z)=-f(z)$,
\item[\texttt{(-+)}] $f(\gamma_7z)=-f(z)$, $f(\gamma_5z)=f(z)$,
\item[\texttt{(--)}] $f(\gamma_7z)=-f(z)$, $f(\gamma_5z)=-f(z)$.
\end{itemize}
For ease of notation, we will refer to these four symmetry classes as
\texttt{(++)}, \texttt{(+-)}, \texttt{(-+)}, and \texttt{(--)}, respectively.

Let $\Gamma=N(\Gamma_0(19))$ be the orientation preserving
arithmetic maximal reflection group and let
$\hat{\Gamma}=\langle\Gamma,\gamma_7\rangle$ be
the non-orientation preserving arithmetic maximal reflection group. Moreover,
let $M_{19}=\Gamma\backslash\h$ and $\hat{M}_{19}=\hat{\Gamma}\backslash\h$
be the corresponding quotient surfaces.
Maass forms on $(\Gamma',\Id)$ with symmetry $\chi$ are Maass forms on
$(\hat{\Gamma},\chi)$ and vice-versa.

Computing Maass cusp forms on $(\hat{\Gamma},\chi)$, we need to take care of
one cusp only, while we obtain the discrete spectra on $N_{19}$ and $M_{19}$,
\begin{align*} &
\spec(N_{19})=
\spec(\hat{\Gamma},\texttt{(++)}) \cup \spec(\hat{\Gamma},\texttt{(+-)}) \cup
\spec(\hat{\Gamma},\texttt{(-+)}) \cup \spec(\hat{\Gamma},\texttt{(--)}), \\ &
\spec(M_{19})=
\spec(\hat{\Gamma},\texttt{(++)}) \cup \spec(\hat{\Gamma},\texttt{(-+)}).
\end{align*}
The first few eigenvalues of $\spec(\hat{\Gamma},\chi)$, and
hence, for $N_{19}$ and $M_{19}$ are listed in Table~\ref{tab:n=19}.

\begin{table}
\caption{\label{tab:n=19}Consecutive eigenvalues $\lambda$ of Maass cusp forms
on the index two subgroup $\Gamma'$ of $\Gamma=N(\Gamma_0(19))$ in dependence
of the symmetry class. We note that there is only one eigenvalue for symmetry
class~\texttt{(+-)}.}
\begin{tabular}{cccc}
$\lambda$ & $\lambda$ & $\lambda$ & $\lambda$ \\
\tiny{symmetry clsss~\texttt{(++)}} & \tiny{symmetry class~\texttt{(+-)}} &
\tiny{symmetry class~\texttt{(-+)}} & \tiny{symmetry class~\texttt{(--)}} \\
\hline
2.018365089 & 0.169612040 & 5.526238214 & 5.623644096 \\
5.178444802 & & 10.44255994 & 11.09994133 \\
8.183549674 & & 11.98729193 & 13.34726954 \\
9.893410725 & & 17.09642816 & 19.16508797 \\
16.96819424 & & 19.89417157 & 21.71327432 \\
18.55690731 & & 23.09408224 & 24.65978283 \\
18.59544749 & & 27.31732822 & 28.16059477 \\
21.32460096 & & 30.03293145 & 30.44847115 \\
24.00366338 & & 31.23572386 & 36.66774935 \\
30.96013885 & & 38.37766641 & 37.67237085 \\
32.92132204 & & 40.77536109 & 40.74040666 \\
33.39522368 & & 41.99782317 & 43.19669501 \\
36.12307204 & & 45.98970332 & 46.18071110 \\
41.54143328 & & 49.35948927 & 49.21126748 \\
41.84058775 & & 49.68599715 & 52.24479435 \\
44.25260852 & & 55.81902339 & 54.63308313 \\
46.79262854 & & 58.65413871 & 57.51914857 \\
51.16589334 & & 60.25006194 & 61.12735841 \\
54.93920130 & & 63.93528420 & 63.81506781 \\
\vdots & & \vdots & \vdots \\
\end{tabular}
\end{table}

\ignore{%

\subsection{Maass cusp forms on arithmetic maximal reflection groups with one
ideal vertex}
\label{sec 4.2}

There are thirteen non-cocompact arithmetic maximal reflection groups with
one ideal vertex \cite[\S4.2]{LakelandThesis}. These are the groups of level
$1,2,3,5,6,7,10,13,14,21,30,34,39$ on which we compute Maass cusp forms
numerically.

Maass forms on these groups fall into two symmetry classes,
\begin{itemize}
\item[\texttt{(+)}] $f(x+iy) =f(-x+iy)$,
\item[\texttt{(-)}] $f(x+iy) =-f(-x+iy)$.
\end{itemize}

For both symmetry classes~\texttt{(+)} and \texttt{(-)}, there are
infinitely many Maass cusp forms per level. We are interested in small
eigenvalues. We have taken some arbitrary spectral cutoff at $200$ and
have computed all Maass cusp forms with eigenvalue in the interval
$(0,200]$. The number of Maass cusp forms with eigenvalue in this
interval is listed in Table~\ref{tab:1c} in dependence of the level and
the symmetry class. The smallest eigenvalue in dependence of the level
and the symmetry class is listed in Table~\ref{tab:1}.

\begin{table}
\caption{\label{tab:1c}The number of Maass cusp forms with eigenvalue less
than $200$ in dependence of the level and the symmetry class.}
\begin{tabular}{c|cc}
& $\#\{\lambda\in(0,200]\}$ & $\#\{\lambda\in(0,200]\}$ \\
level & \tiny{symmetry class~\texttt{(+)}} &
\tiny{symmetry class~\texttt{(-)}} \\
\hline
1 & 1 & 2 \\
2 & 3 & 5 \\
3 & 7 & 8 \\
5 & 14 & 15 \\
6 & 13 & 15 \\
7 & 22 & 22 \\
10 & 25 & 26 \\
13 & 46 & 44 \\
14 & 37 & 36 \\
21 & 55 & 50 \\
30 & 62 & 59 \\
34 & 104 & 94 \\
39 & 107 & 96 \\
\end{tabular}
\end{table}

\begin{table}
\caption{\label{tab:1}The smallest eigenvalue on non-cocompact arithmetic
maximal reflection groups in dependence of the level and the symmetry class.}
\begin{tabular}{c|cc}
& $\lambda$ & $\lambda$ \\
level & \tiny{symmetry class~\texttt{(+)}} &
\tiny{symmetry class~\texttt{(-)}} \\
\hline
1 &190.13154731993 & 91.14134533636 \\
2 & 79.86772480211 & 52.39099209318 \\
3 & 26.24716905184 & 37.71144485623 \\
5 & 17.32676459667 & 24.23291079933 \\
6 & 26.24716905184 & 20.93844217220 \\
7 & 12.18168073797 & 17.21623755520 \\
10 & 7.72533320914 & 13.16416137672 \\
13 & 4.35177688345 & 8.26346629788 \\
14 & 5.49468550736 & 9.34052404994 \\
21 & 1.87318457587 & 7.32202939604 \\
30 & 2.11003405354 & 5.82589930819 \\
34 & 1.18920064917 & 4.12297031172 \\
39 & 0.74948745093 & 4.50758855061 \\
\end{tabular}
\end{table}

}%

\subsection{Maass cusp forms on arithmetic maximal reflection groups with two
ideal vertices}
\label{sec 4.3}

There are ten non-cocompact arithmetic maximal reflection groups with two
ideal vertices which we consider \cite[\S4.3]{LakelandThesis}. These are the groups of
level $11,15,17,19,22,26,33,42,55,66$ on which we compute Maass cusp forms
numerically.

Similarly as before, Maass forms on these groups fall into four symmetry
classes.
\ignore{%
\begin{itemize}
\item[\texttt{(++)}] $f(x+iy) =f(-x+iy)$, $f(g z) =f(z)$,
\item[\texttt{(+-)}] $f(x+iy) =f(-x+iy)$, $f(g z) =-f(z)$,
\item[\texttt{(-+)}] $f(x+iy) =-f(-x+iy)$, $f(g z) =f(z)$,
\item[\texttt{(--)}] $f(x+iy) =-f(-x+iy)$, $f(g z) =-f(z)$,
\end{itemize}
where $g$ is an elliptic element of order two not in the group, but is a
generator of the index two supergroup $N(\Gamma_0(n))$.

}%
For each of the symmetry classes~\texttt{(++)}, \texttt{(-+)}, and
\texttt{(--)}, there are infinitely many Maass cusp forms per level.
For the symmetry class~\texttt{(+-)} there are also infinitely many Maass
cusp forms for level $15$ and $17$, but there is at most one Maass cusp
form per level $11,19,22,26,33,42,55,66$. The reason is that for the
latter levels the group is not congruence.

We are interested in small eigenvalues. The number of Maass cusp forms
with eigenvalue less than $200$ is listed in Table~\ref{tab:2c} in
dependence of the level and the symmetry class. The smallest eigenvalue
in dependence of the level and the symmetry class is listed in
Table~\ref{tab:2}. The upper bounds of \cite[Table~4.2]{LakelandThesis}
are respected.

\begin{table}
\caption{\label{tab:2c}The number of Maass cusp forms with eigenvalue less
than $200$ in dependence of the level and the symmetry class.}
\begin{tabular}{c|cccc}
& $\#\{\lambda\in(0,200]\}$ & $\#\{\lambda\in(0,200]\}$ &
$\#\{\lambda\in(0,200]\}$ & $\#\{\lambda\in(0,200]\}$ \\
level & \tiny{symmetry class~\texttt{(++)}} &
\tiny{symmetry class~\texttt{(+-)}} &
\tiny{symmetry class~\texttt{(-+)}} &
\tiny{symmetry class~\texttt{(--)}} \\
\hline
11 & 37 & 1 & 38 & 38 \\
15 & 36 & 37 & 38 & 38 \\
17 & 63 & 58 & 63 & 60 \\
19 & 71 & 1 & 68 & 68 \\
22 & 60 & 1 & 61 & 61 \\
26 & 74 & 1 & 72 & 72 \\
33 & 86 & 0 & 84 & 84 \\
42 & 85 & 0 & 83 & 83 \\
55 & 140 & 1 & 128 & 127 \\
66 & 138 & 1 & 129 & 128 \\
\end{tabular}
\end{table}

\begin{table}
\caption{\label{tab:2}The smallest eigenvalue on non-cocompact arithmetic
maximal reflection groups in dependence of the level and the symmetry
class. On the non-congruence subgroups of level $33$ and $44$ there are
no Maass cusp forms of symmetry class~\texttt{(+-)}.}
\begin{tabular}{c|cccc}
& $\lambda$ & $\lambda$ & $\lambda$ & $\lambda$ \\
level & \tiny{symmetry class~\texttt{(++)}} &
\tiny{symmetry class~\texttt{(+-)}} & \tiny{symmetry class~\texttt{(-+)}} &
\tiny{symmetry class~\texttt{(--)}} \\
\hline
11 & 6.41822455110 & 0.24456267323 & 9.06024545639 &10.00253800339 \\
15 &10.62011651411 & 3.56777601683 & 5.82589930819 & 9.42106297240 \\
17 & 3.67134534972 & 0.25000000000 & 4.12297031172 & 6.24399453828 \\
19 & 2.01836508907 & 0.16961204041 & 5.52623821363 & 5.62364409573 \\
22 & 3.24185004329 & 0.23828308162 & 4.93732929346 & 5.81410275753 \\
26 & 2.14598046442 & 0.21588714472 & 4.46353001031 & 5.05501819375 \\
33 & 2.24813673858 & & 2.67134067803 & 4.83385807583 \\
42 & 1.87318457587 & & 2.92045953841 & 4.43972502521 \\
55 & 0.68768162820 & 0.14843098682 & 2.76461871050 & 3.30378314566 \\
66 & 0.62303205749 & 0.15286533999 & 2.67134067803 & 3.43082014307 \\
\end{tabular}
\end{table}

The first discrete eigenvalue on $N_{17}$ is elusive. It belongs to a CM form.

\subsection{CM forms}
\label{sec 4.4}

CM forms are Maass cusp forms that arise as theta-lifts from $\gl1$.
CM forms with eigenvalue $\lambda=1/4$ were first constructed by Hecke
\cite{He27}. Later, Maass gave a more general construction of CM forms
and identified them with Maass cusp forms \cite{Maa49}.

CM forms live on congruence subgroups of certain levels.
On arithmetic maximal reflection groups with two ideal vertices,
there are CM forms for level $n=17$ (only). By construction, they are
in symmetry class~\texttt{(+-)}.

Consider the level $n=17$. Let $F=\Q(\sqrt{17})$ be a real quadratic field.
Let $\eta=4+\sqrt{17}$ be the usual fundamental unit for $\mathcal{O}_F$.
Let $k$ be an arbitrary integer and $r=\frac{\pi k}{\log \eta}$
be the spectral parameter. Then, $\lambda=r^2+1/4$ is the eigenvalue of a
CM form on the surface $N_{17}$. If $k\ne0$, the Fourier expansion
coefficients of these CM forms are explicitely given in \cite{HS01},

The CM form for $k=0$ is special and needs to be treated with care.
According to the definition of Maass cusp forms, they should be
square-integrable. Obviously, this is the case for all Maass cusp forms
(CM forms included), except for the CM form with eigenvalue $\lambda=1/4$.
The latter has the term $a_0y^{1/2}$ in its Fourier expansion
whose $L^2$-norm diverges logarithmically.

The CM form with eigenvalue $\lambda=1/4$ is elusive.
Its $L^2$-norm exists in the sense of a distribution and its Fourier
coefficients read
\begin{align*}
a_0= \log{\eta}, \quad
a_1= 1, \quad
a_p= \left(\frac{n}{p}\right)_{\text{Kr}}+1 \quad \text{for $p$ prime}, \quad
a_{n_1}a_{n_2}=
\sum_{l \mid (n_1,n_2)} \left(\frac{n}{l}\right)_{\text{Kr}} a_{n_1 n_2/l^2},
\end{align*}
where $\left(\frac{n}{l}\right)_{\text{Kr}}$ is the Kronecker symbol, a
multiplicative character modulo the level $n$.

\section{Conclusions}
\label{sec 5}

The most immediate conclusion to be drawn from the results above is that there
exist maximal arithmetic hyperbolic reflection groups with $\lambda_1 < 1/4$.
Further, the first eigenvalue for these groups can be lower than it must be
for congruence groups, and so one cannot hope that there is a lower bound for
$\lambda_1$ corresponding with the known lower bounds for congruence groups.
The smallest first eigenvalue found here is 0.14843, but we have no reason to
believe that this should itself serve as a lower bound for $\lambda_1$ amongst
all maximal arithmetic hyperbolic reflection groups.

We note that for all of the examples considered here, the Cheeger constant
$h \leq 1$, which is true because these are cusped hyperbolic surfaces, and a
cusp neighborhood always has Cheeger ratio equal to 1. In all of the cases
here where $\lambda_1 \leq 1$ on $N_n$, we find that $\lambda_1 < h$, so the
Cheeger constant serves as an upper bound for the first eigenvalue. Further
investigation may be helpful here in order to understand whether, and in what
circumstances, we may take the Cheeger constant to serve as an upper bound for
$\lambda_1$, and in which circumstances we have $\lambda_1 < 1$.

It is interesting to compare each
Cheeger constant $h(M_n)$ with that of its double cover $h(N_n)$. Recall that
each surface $M_n$ is congruence. With the exception of the case $n=33$, when
the cover $N_n$ is not congruence, the ratio $\dfrac{h(N_n)}{h(M_n)}$ is less
than $1/2$. In both cases where $N_n$ is congruence, the ratio is larger than
$1/2$. This raises the following question.\\

\noindent {\bf Question.} Suppose $\Gamma$ is a congruence subgroup
commensurable with $\mathrm{PSL}_2(\mathbb Z)$, $\Gamma' < \Gamma$ an index
two subgroup, and that
$h(\mathbb{H}^2 / \Gamma') / h(\mathbb{H}^2 / \Gamma) < 1/2$. Must $\Gamma'$
then be a non-congruence group?\\

%
%

It is perhaps interesting to note that the cases $n=15$ and $n=42$ produce examples of orbifold surfaces
$N_{15}$ and $N_{42}$ where $\lambda_1^{\mbox{disc}}$ is so large that using it instead of the spectral gap (in this case $1/4$) would result in examples which appear to fail to satisfy Buser's inequality.
Specifically, for $N_{15}$, we have $h=0.498728$, and $\lambda_1^{\mbox{disc}} = 3.5678$,
which exceeds $2h+10h^2 = 3.4848$. For $N_{42}$ we have $h=0.328406$ and
$\lambda_1^{\mbox{disc}} = 1.87318$, which exceeds $2h+10h^2 = 1.7353$. Recall that Buser's inequality is formulated for closed manifold surfaces, and our examples have cusps and cone points. Nevertheless, these examples are worthy of further investigation, as this observation may indicate the presence of some different geometric properties which correspond to higher discrete eigenvalues.



\ignore{
\newpage\mbox{}\newpage\vspace*{\fill}%

\section*{To be checked!}%

We have some discrepancies in the values of the Cheeger constants.

\vskip2ex

In section~\ref{s:results}, we have
\begin{tabular}{c|c}
$n$ & $h$ \\
\hline
11 & 0.306349 \\
15 & 0.498728 \\
17 & 0.374067 \\
19 & 0.183809 \\
22 & 0.279467 \\
26 & 0.239543 \\
33 & 0.407274 \\
42 & 0.328406 \\
55 & 0.204233 \\
66 & 0.218937 \\
\end{tabular},
while in \cite[Table~4.2]{LakelandThesis} the Cheeger constant reads
\begin{tabular}{c|c}
$n$ & $h$ \\
\hline
11 & 0.310382 \\
15 \\
17 \\
19 & 0.189393 \\
22 & 0.177715 \\
26 & 0.234439 \\
33 & 0.476386 \\
42 & 0.351283 \\
55 & 0.208312 \\
66 & 0.317591 \\
\end{tabular}.

\vskip2ex

Plugging the Cheeger constants in Cheegers and Busers inequalities results in

\vskip2ex

\begin{tabular}{c|c}
& $h$ from section~\ref{s:results} \\
$n$ & $\quad h^2/4 \quad \leq \quad \lambda_1 \quad \leq 2h+10h^2$ \\
\hline
11 & 0.0234624 $\leq$ 0.24456 $\leq$ 1.55120 \\
15 & 0.0621824 $\leq$ \textbf{3.5678 \,? 3.4848} \\
17 & 0.0349815 $\leq$ 0.25000 $\leq$ 2.14740 \\
19 & 0.0084464 $\leq$ 0.16961 $\leq$ 0.70548 \\
22 & 0.0195255 $\leq$ 0.23828 $\leq$ 1.33995 \\
26 & 0.0143452 $\leq$ 0.21588 $\leq$ 1.05289 \\
33 & 0.0414680 $\leq$ 2.24814 $\leq$ 2.47327 \\
42 & 0.0269626 $\leq$ \textbf{1.8732 \,? 1.7353} \\
55 & 0.0104278 $\leq$ 0.14843 $\leq$ 0.82558 \\
66 & 0.0119834 $\leq$ 0.15286 $\leq$ 0.91721 \\
\end{tabular}
\hfill and \hfill
\begin{tabular}{c|c}
& $h$ from \cite[Table~4.2]{LakelandThesis} \\
$n$ & $\quad h^2/4 \quad \leq \quad \lambda_1 \quad \leq 2h+10h^2$ \\
\hline
11 & 0.0240842 $\leq$ 0.24456 $\leq$ 1.58413 \\
15 \\
17 \\
19 & 0.0089674 $\leq$ 0.16961 $\leq$ 0.73748 \\
22 & 0.0078956 $\leq$ 0.23828 $\leq$ 0.67126 \\
26 & 0.0137404 $\leq$ 0.21588 $\leq$ 1.01849 \\
33 & 0.0567359 $\leq$ 2.24814 $\leq$ 3.22221 \\
42 & 0.0308499 $\leq$ 1.87318 $\leq$ 1.93656 \\
55 & 0.0108485 $\leq$ 0.14843 $\leq$ 0.85056 \\
66 & 0.0252160 $\leq$ 0.15286 $\leq$ 1.64382 \\
\end{tabular}.

\newpage%
}

\ignore{%

\appendix

\section{Hejhal's algorithm}
\label{sec A}

Let $\Gamma$ be a non-compact Fuchsian group that acts on the upper half-plane
$\h$. The quotient $M=\Gamma\backslash\h$ is a surface with cusps.
We assume that $M$ has exactly one cusp. For the case of several cusps,
we refer the interested reader to \cite[\S4]{SS02}.
By a suitable choice of coordinates $z=x+iy$ on $\h$, the cusp of $M$ is at
$i\infty$ and is stabilized by the parabolic element
$(\begin{smallmatrix} 1 & 1 \\ 0 & 1 \end{smallmatrix}) \in \Gamma$.

All groups that we consider are invariant with respect to conjugation with
$(\begin{smallmatrix} 1 & 0 \\ 0 & -1 \end{smallmatrix})$. In this specific
case, Maass forms fall into two symmetry classes, even and odd, i.e.,
$f(x+iy)=f(-x+iy)$ and $f(x+iy)=-f(-x+iy)$, respectively. This allows us to
extend $\chi$ to be a multiplicative character on $\langle \Gamma,
(\begin{smallmatrix} 1 & 0 \\ 0 & -1 \end{smallmatrix}) \rangle$
with $\chi(\begin{smallmatrix} 1 & 0 \\ 0 & -1 \end{smallmatrix})=1$
and $\chi(\begin{smallmatrix} 1 & 0 \\ 0 & -1 \end{smallmatrix})=-1$,
respectively. We define
\begin{align*}
\cs_\chi(x):=\begin{cases} 2\cos(x) &
\text{if } \chi(\begin{smallmatrix} 1 & 0 \\ 0 & -1 \end{smallmatrix})=1, \\
2\sin(x) &
\text{if } \chi(\begin{smallmatrix} 1 & 0 \\ 0 & -1 \end{smallmatrix})=-1.
\end{cases}
\end{align*}

Let $\lambda=r^2+1/4$ be the eigenvalue of a Maass cusp form where $r$ is
the spectral parameter. We take $r$ to be in $[0,\infty)\cup i[0,1/2)$.
The Fourier expansion of an even and an odd Maass cusp form, respectively,
can be written
\begin{align*}
f(x+iy)=a_0y^{1/2+ir}+\sum_{n=1}^\infty a_ny^{1/2}K_{ir}(2\pi ny)\cs(2\pi nx),
\end{align*}
where $K$ stands for the $K$-Bessel function \cite{BST13}.

The spectral coefficients grow at most polynomially in $n$ \cite{Maa49}
while the $K$-Bessel function decays exponentially for large arguments
\begin{align*}
K_{ir}(y)\sim\sqrt{\frac{\pi}{2y}}e^{-y}\ \text{ for }\ y\to\infty.
\end{align*}

Allowing for a numerical error of size $[[\eps]]$, where $[[\eps]]$ stands
for $\abs{numerical\ error}\lesssim\eps$, we can truncate the absolutely
convergent Fourier expansion,
\begin{align*}
f(x+iy)=a_0y^{1/2+ir}+\sum_{n=1}^T a_ny^{1/2}K_{ir}(2\pi ny)\cs(2\pi nx)+[[\eps]],
\end{align*}
where $T$ depends on the desired accuracy $\eps>0$, on the spectral parameter
$r$, and on $y$. Larger $y$ allow for smaller $T$.

By a finite Fourier transform, the truncated Fourier expansion is solved for
its coefficients
\begin{align}\label{*} &
a_0y^{1/2+ir}
=\frac{1}{2Q}\sum_{x\in\X}f(x+iy)\cs(0) + [[\eps]], \nonumber \\ &
a_my^{1/2}K_{ir}(2\pi my)
=\frac{1}{2Q}\sum_{x\in\X}f(x+iy)\cs(2\pi mx) + [[\eps]], \quad 1\leq m\leq T,
\end{align}
where $\X$ is an equally spaced set of numbers
\begin{align*}
\Big\{ \frac{\frac{1}{2}}{2Q}, \frac{\frac{3}{2}}{2Q}, \ldots,
\frac{Q-\frac{3}{2}}{2Q}, \frac{Q-\frac{1}{2}}{2Q} \Big\}
\end{align*}
with $2Q>T+m$.

Let $\F$ be the fundamental domain of $M$ that lies entirely above some
horocycle of height $Y$, $\min_{x+iy\in\F}y\geq Y$. By automorphy on
$(\Gamma,\chi)$, we have
\begin{align}\label{***}
f(z)=\chi(\gamma^*)f(\gamma^*z),
\end{align}
where $\gamma^*z$ is the $\Gamma$-pullback of the point $z$ into the
fundamental domain $\F$,
\begin{align*}
z\in\h, \quad \gamma^*\in\Gamma, \quad z^*:=\gamma^*z\in\F.
\end{align*}
The $\Gamma$-pullback is computed using Str\"ombergsson's pullback
algorithm \cite{Str00}.

Any Maass cusp form can thus be approximated by
\begin{align}\label{**}
f(x+iy)=\chi(\gamma^*)f(x^*+iy^*)=\chi(\gamma^*)\Big(
a_0{y^*}^{1/2+ir}+\sum_{n=1}^{T_0} a_n{y^*}^{1/2}K_{ir}(2\pi ny^*)
\cs(2\pi nx^*)+[[\eps]]\Big),
\end{align}
where $y^*$ is always larger than or equal to the height of the lowest point
of the fundamental domain,
\begin{align*}
y_0:=\min_{x+iy\in\F}y,
\end{align*}
effectively allowing us to replace $T(\eps,r,y)$ by
$T_0=T(\eps,r,y_0)$.

Choosing $y$ smaller than $y_0$, the $\Gamma$-pullback of any point $x+iy$
into the fundamental domain is non-trivial and \eqref{***} is called implicit
automorphy. Making use of implicit automorphy by replacing $f(x+iy)$ in
\eqref{*} with the right-hand side of \eqref{**} yields
\begin{align*} &
a_0y^{1/2+ir}
=\frac{1}{2Q}\sum_{x\in\X}\chi(\gamma^*)
\sum_{n=1}^{T_0} a_n{y^*}^{1/2}K_{ir}(2\pi ny^*)\cs(2\pi nx^*)\cs(0)
+ [[2\eps]], \nonumber \\ &
a_my^{1/2}K_{ir}(2\pi my)
=\frac{1}{2Q}\sum_{x\in\X}\chi(\gamma^*)
\sum_{n=1}^{T_0} a_n{y^*}^{1/2}K_{ir}(2\pi ny^*)\cs(2\pi nx^*)\cs(2\pi mx)
+ [[2\eps]], \quad 1\leq m\leq T,
\end{align*}
which is the central identity of Hejhal's algorithm.

With this identity, the coefficients $a_m$ can be determined for all
$m\in\{0,\ldots,T\}$ so long as $y<y_0$ is choosen such that $2\pi my$
does not become close to a zero of the K-Bessel function $K_{ir}$.

Taking $0\leq m\leq T$ and forgetting about the error $[[2\eps]]$, the set
of equations can be rewritten as
\begin{align}\label{+}
\sum_{n=0}^{T_0} V_{mn}(r,y)a_n=0, \quad 0\leq m\leq T,
\end{align}
where the matrix $V=(V_{mn})$ is given by
\begin{align*} &
V_{0n}=y^{1/2+ir}\delta_{0n}-\frac{1}{2Q}\sum_{x\in X}\chi(\gamma^*)
{y^*}^{1/2}K_{ir}(2\pi ny^*)\cs(2\pi nx^*)\cs(0), \\ &
V_{mn}=y^{1/2}K_{ir}(2\pi my)\delta_{mn}-\frac{1}{2Q}\sum_{x\in X}\chi(\gamma^*)
{y^*}^{1/2}K_{ir}(2\pi ny^*)\cs(2\pi nx^*)\cs(2\pi mx), \quad 1\leq m\leq T.
\end{align*}

Since $y$ can always be chosen such that $K_{ir}(2\pi my)$ is not too
small, the diagonal terms in the matrix $V$ do not vanish for large $m$ and
the matrix is well conditioned. This makes the algorithm stable.

If $\lambda$ is an eigenvalue of a Maass cusp form on $(\Gamma,\chi)$ then
the central identity of Hejhal's algorithm has a non-trivial solution
which is independent of $y$. The non-trivial solution is an accurate
numerical approximation to the first few expansion coefficients
$a_n$, $0\leq n\leq T$, of the corresponding Maass cusp form.

Otherwise, if $\lambda$ is not an eigenvalue of a Maass cusp form on
$(\Gamma,\chi)$ then the central identity of Hejhal's algorithm does not
have a non-trivial solution that is independent of $y$.

The discrete eigenvalues are of measure zero in the real numbers and
we do not know the eigenvalues \`a priori. We need to complement
Hejhal's algorithm with a search strategy that finds the desired eigenvalues.
We use two different heuristic strategies for searching and finding the
eigenvalues. The first strategy is based on an adaptive discretization of the
eigenvalue axis and is described in \cite[end of \S2 and \S3]{The05}.
The second strategy is based on linear perturbation theory in $\lambda$ and
is described in \cite{The12}.

}%

\bibliographystyle{amsplain}
\bibliography{cheegerrefs}

\end{document}